\newtheorem{thm}{Theorem}[section]
\newtheorem{lemma}[thm]{Lemma}
\newtheorem{cor}[thm]{Corollary}
\newtheorem{prop}[thm]{Proposition}
\newtheorem{remark}[]{Remark}
\def\XXint#1#2#3{{\setbox0=\hbox{$#1{#2#3}{\int}$}
		\vcenter{\hbox{$#2#3$}}\kern-.5\wd0}}
\def \R {\mathbb R}
\def \RN  {\mathbb{R}^N}
\def \RD  {\mathbb{R}^d}
\numberwithin{equation}{section}
\begin{document}
	
\title{Decay of solutions to one-dimensional inhomogeneous nonlinear Schr\"odinger equations\thanks{This work is supported by NSFC under the grant Nos. 12075102, 12301090 and 11971212.} }
\author{Zhi-Yuan Cui, Yuan Li and Dun Zhao\thanks {E-mail addresses: zhaod@lzu.edu.cn (Dun Zhao)}\\{\small School of Mathematics and Statistics, Lanzhou University, Lanzhou 730000, China}}
\date{}
\maketitle
	
\begin{abstract}

We investigate the decay estimates of global solutions for a class of one-dimensional inhomogeneous nonlinear Schr\"odinger equations. While most existing results focus on spatial dimensions $d\geq2$, the decay properties in one dimension remain less explored due to the absence of effective Morawetz inequalities. For equations without external potential, by establishing a localized Virial-Morawetz identity, we derive decay estimates in the context of the $L^r$-norm for global solutions within a compact domain as a time subsequence approaches infinity. This decay result can be applied to obtain a criterion for energy scattering. Additionally, by establishing another type of Virial-Morawetz identity under more strict conditions, we demonstrate the decay result for odd solutions for any time sequence that approaches infinity. Utilizing some results about bound states proved by Barry Simon, we also show that similar decay results hold for the global odd solutions of equations with suitable external potentials that contain inverse power type and Yukawa-type potentials.
		
\vspace{0.5cm}

\noindent \textbf{Key words}: inhomogeneous nonlinear Schr\"{o}dinger equation; decay; Morawetz-type estimate; weighted Sobolev space; external potential

\vspace{0.5cm}
		
\noindent \textbf{MSC Classification}: 35Q55, 35B40
	
\end{abstract}

\section{Introduction}
	
In this paper, we explore the decay properties of global solutions of the inhomogeneous nonlinear Schr\"odinger (INLS) equation in the case $d=1$:
	\begin{equation}\label{eq1}
		\left\{\begin{aligned}
			(i\partial_t+\Delta) u(t,x)&=K(x)|u(t,x)|^{2\sigma}u(t,x)+\mu V(x)u(t,x),\\
			u(0,x)&=u_0(x)\in H^1(\RD),
		\end{aligned}\right.~~(t,x)\in\mathbb{R}\times\mathbb{R}^d,
	\end{equation}
where $0<\sigma<\infty$ and functions $K$, $V: \RD\rightarrow\R$ are not constant. The INLS equation has many applications in various physical contexts. For instance, it models beam propagation, where a preliminary laser beam creates a channel with reduced electron density in a plasma, enabling stable high-power propagation by reducing the nonlinearity inside the channel \cite{Gill2000,LT1994}. In this case, the complex-valued function $u=u(t,x)$ denotes the electric field in laser optics, $K$ is proportional to the electric density, and $V$ represents the external potential. When $\sigma=1$, it becomes the Gross-Pitaevskii equation, which describes the dilute Bose-Einstein condensate with the spatially dependent two-body interactions \cite{2007PRL}. If $\mu=0$ and $K(x)=\pm1$, equation \eqref{eq1} becomes the standard model, which has been extensively studied in  previous years. We refer to, for instance, \cite{FXC2011,Cz2003,Dodson2019,DHR2008,HR2008} and the references therein.

After the pioneering work by Merle \cite{Merle1996} under the assumption $\mu=0$ and $k_1<K(x)<k_2$ with $k_1,~ k_2>0$, the INLS equation has attracted particular attention over the past two decades. For example, Rapha\"el and Szeftel \cite{RS2011} gave a necessary and sufficient condition on the inhomogeneity $K(x)$ to ensure the existence of minimal blow-up solutions. In particular, many works have been devoted to the case that $K(x)\sim|x|^{-b}$, and the topics cover many subjects, such as the well-posedness theory, the existence and stability of solitary waves, and blow-up solutions. For example, we refer to \cite{BF2005,GS2008,BL2023,Farah2016,Guz2017} and the references therein.

Now we recall some scattering and decay results for the INLS equation. When $\mu=0$, $K(x)=-|x|^{-b}$ (focusing case), the energy scattering (decay) for the problem \eqref{eq1} has been initially established by Farah et al. \cite{FG2017} for $0<b<\frac{1}{2},~ \sigma=1$ and $d=3$ with radial assumption, the proof is based on the concentration-compactness argument developed by Kenig and Merle \cite{KM2006}. This result has been extended to $d\geq2$ for $0<b<\min\{\frac{d}{3},1\}, ~\frac{4-2b}{d}<2\sigma<2_*$, see \cite{FG2019}. Later, the energy scattering of the non-radial solution of \eqref{eq1} for $0<b<\frac{1}{2},~\sigma=1$ and $d=3$ was obtained by Miao, Murphy, and Zheng \cite{MMZ2021}. Recently, this result has been extended to $d \geq 2$ and more large ranges of the parameters $\sigma$ and $b$, refer to \cite{CFGM2020,Dinh2021-3,CC2021}. In addition, for $d=3$, the energy scattering for the solutions of the energy-critical INLS equations was proved \cite{CHL2020,GM2021}. When $\mu=0$, $K(x)=|x|^{-b}$ (defocusing case), the energy scattering (decay) for $d\geq3$ has been proved by Dinh \cite{Dinh2017} by the same arguments as in \cite{Vis2008} via Morawetz estimates:
	$$		\int_J\int_{\RD}|x|^{-b-1}|u(t,x)|^{2\sigma+2}dxdt<\infty.	$$
After that, Dinh \cite{Dinh2021-2} proved energy scattering for $d\!=\!2$ with the radial assumption for both focusing and defocusing cases via similar Morawetz estimates. More recently, \cite{AT2024} tackled the scattering problem with small initial data under more general conditions on $K(x)$. In particular, when $|K(x)|=|x|^{-b}(1+|x|^2)^{-\frac{a}{2}}$, the scattering result has been yielded for the case $d\geq4$. As we can see, the majority of studies have concentrated on spatial dimensions $d\geq2$. When $\mu\neq0$,  Dinh \cite{Dinh2021-4} has considered the global dynamics for both focusing and defocusing INLS equations with Kato-type potentials for $d=3$ with radially symmetric initial data. \cite{CG2021} has considered the INLS equations with inverse-square potential for $d\geq 3$.

Notice that all the previously mentioned works do not study the decay properties of solutions in the one-dimensional case, one reason is due to the absence of effective Morawetz inequalities. To our knowledge, there are two predominant methods for ascertaining the decay property of NLS equations in the energy space: the classical Morawetz technique \cite{GV1985,Cz2003} and the interaction Morawetz approach \cite{CGT2009}. However, due to the non-conservation of momentum in the INLS problem  \eqref{eq1}, the tensor products method for one dimension, as proposed in \cite{CGT2009}, remains ambiguous for INLS equations. Additionally, the classical Morawetz estimation techniques from \cite{GV1985,Cz2003} become invalid due to the negativity of the quadratic form when spatial dimensions $d\leq2$. Recently, based on an idea of \cite{KMM2017}, Mart\'{\i}nez \cite{M2020No} considered the long-time asymptotics of odd solutions of the nonlinear Schr\"odinger equation with semi-linear and Hartree nonlinearities in the space of one dimension and proved that the solutions decay to zero in compact regions of space as time tends to infinity. Inspired by the ideas in \cite{M2020No} and \cite{CC2021}, this paper aims to investigate the decay estimates for the solutions of the INLS equation \eqref{eq1} in the one-dimensional context. We give a Morawetz-type estimate and demonstrate that, in a localized manner, the decay of (odd) solutions of equation (\ref{eq1}) (without or with an external potential) for $d=1$, which is achieved by choosing  suitable weights to establish Virial-Morawetz identities.
	
Hereafter, we assume that  $0\!<\!b\!<\!1$, and $K\in C^1(\R\setminus \{0\})$ is an even function that satisfies one of the following conditions:
\begin{itemize}[leftmargin=4em]
		\item[($K_1$)]  $ 0<K(x)\lesssim|x|^{-b},\quad xK^{\prime}(x)\leq0,\quad |K^{\prime}(x)|\lesssim|x|^{-b-1}$,
\end{itemize}
\begin{itemize}[leftmargin=4em]
		\item[($K_2$)] $ |K(x)|\lesssim|x|^{-b},\quad |K^{\prime}(x)|\lesssim|x|^{-b-1}$,
\end{itemize}
\begin{itemize}[leftmargin=4em]
		\item[($K_3$)] $ |K(x)|\lesssim|x|^{-b}(1+|x|)^{-2+b},\quad |K^{\prime}(x)|\lesssim|x|^{-b-1}(1+|x|)^{-2+b}$,
\end{itemize}
\begin{itemize}[leftmargin=4em]
     \item [($K_4$)] $ |K(x)|\lesssim|x|^{-b}(1+|x|)^{-3},\quad |K^{\prime}(x)|\lesssim|x|^{-b-1}(1+|x|)^{-3}$.
\end{itemize}
	
 Under the assumption $(K_1)$, by an abstract theory presented in \cite{Cz2003}, or under the assumption $(K_2)$, or $(K_3)$, or $(K_4)$, through the method proposed by Farah\cite{Farah2016}, the global well-posedness of equation \eqref{eq1} can be achieved when $\mu=0$. We have
\begin{prop}
Assume $\mu=0$ and $u_0(x)\in H^1(\R)$, then
\begin{itemize}
			\item [(1)] If $K(x)$ satisfies $(K_1)$ and $0<\sigma<\infty$, then problem \eqref{eq1} is globally well-posed in $H^1(\R)$.
			\item[(2)] If $K(x)$ satisfies $(K_2)$, or $(K_3)$, or $(K_4)$, then problem \eqref{eq1} is globally well-posed in $H^1(\R)$ for $0<\sigma<2-b$ or for $2-b\leq\sigma<\infty$ with small initial data.
\end{itemize}
\end{prop}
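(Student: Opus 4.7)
The plan is to follow the standard two-step strategy: first establish local existence and uniqueness in $H^1(\R)$ by a contraction-mapping argument in a suitable Strichartz space, then propagate the local solution to a global one via the two conservation laws
\[
M(u(t)) = \tfrac{1}{2}\|u(t)\|_{L^2}^2, \qquad
E(u(t)) = \tfrac{1}{2}\|u_x(t)\|_{L^2}^2 + \tfrac{1}{2\sigma+2}\int_{\R} K(x)|u(t,x)|^{2\sigma+2}\,dx,
\]
combined with an a~priori bound on $\|u_x(t)\|_{L^2}$.

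For part (1), under $(K_1)$ the potential energy is automatically non-negative because $K(x)>0$. Since $|K(x)|\lesssim|x|^{-b}$ with $0<b<1$ is locally integrable and the one-dimensional Sobolev embedding $H^1(\R)\hookrightarrow L^\infty(\R)$ holds, the nonlinearity $K(x)|u|^{2\sigma}u$ is an $H^1$-subcritical perturbation regardless of how large $\sigma$ is. This places us within the scope of Cazenave's abstract local well-posedness framework; one only needs to verify the structural Lipschitz estimates for the nonlinearity in the right functional setting, which is routine given the pointwise control of $K$ and $K'$ supplied by $(K_1)$. Energy conservation then gives $\|u_x(t)\|_{L^2}^2\le 2E(u_0)$ for all $t$, so the local solution extends globally without any size restriction on $u_0$.

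For part (2), the sign of $K$ is not controlled, so energy alone is not coercive. I would reproduce Farah's argument: one-dimensional Strichartz estimates together with a weighted H\"older--Hardy estimate absorb the integrable singularity $|x|^{-b}$, yielding a local $H^1$ solution. Globalization rests on the inhomogeneous Gagliardo--Nirenberg inequality
\[
\int_{\R} |K(x)|\,|u|^{2\sigma+2}\,dx \;\lesssim\; \|u_x\|_{L^2}^{\sigma+b}\,\|u\|_{L^2}^{\sigma+2-b},
\]
whose scaling exponents precisely single out the mass-critical threshold $\sigma=2-b$. When $\sigma<2-b$ the power $\sigma+b<2$ is strictly sub-quadratic, so Young's inequality lets us absorb the potential term into the kinetic term and extract a uniform $H^1$-bound directly from energy and mass conservation. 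When $\sigma\ge 2-b$ the same inequality yields global existence only under smallness of $\|u_0\|_{H^1}$, via a standard continuity argument on the energy functional.

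The main technical obstacle is controlling the singular weight $|x|^{-b}$ in the Strichartz/H\"older bookkeeping, especially in one dimension where endpoint Strichartz admissibility is delicate. For $(K_2)$ the weight behaves like $|x|^{-b}$ near the origin with no decay at infinity, which forces a decomposition $\R = \{|x|\le 1\}\cup\{|x|>1\}$ with different estimates on each piece. The extra decay built into $(K_3)$ and $(K_4)$ eliminates the obstruction at infinity and is precisely what makes the fixed-point argument close cleanly in those cases. Verifying that the admissible Strichartz pairs used in these estimates actually close in dimension one, and that the resulting Gagliardo--Nirenberg exponents coincide with those dictated by the scaling of the equation, is the step I would expect to demand the most careful bookkeeping.
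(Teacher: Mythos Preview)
Your proposal is correct and follows essentially the same approach as the paper, which does not give an independent proof but simply attributes part~(1) to the abstract theory in Cazenave~\cite{Cz2003} and part~(2) to Farah's method~\cite{Farah2016}. Your sketch correctly identifies the key ingredients---energy coercivity under $(K_1)$, and the inhomogeneous Gagliardo--Nirenberg inequality with the critical exponent $\sigma=2-b$ for the sign-indefinite cases---and your exponents $\|u_x\|_{L^2}^{\sigma+b}\|u\|_{L^2}^{\sigma+2-b}$ match the one-dimensional scaling used in~\cite{Farah2016} (cf.\ equation~\eqref{eq6} in the paper, which records the mass-critical instance $\sigma=2-b$).
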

	
Our first result concerns INLS without potential.
\begin{thm}\label{theorem{1}}
Assume that $\mu=0$. Let $u(t)\in H^1(\R)$ be a global solution of \eqref{eq1} with initial data $u_0\in H^1(\R)$. Then, the following holds:
\begin{itemize}
\item[(1)] For $0<\sigma<\infty$,  if $K$ satisfies $(K_1)$, then there exists a sequence of times $t_n\rightarrow\infty$ such that, for any bounded interval $I\subset\R$,
\begin{equation}\label{eq2}
				\lim_{n\rightarrow\infty}(\|u(t_n)\|_{L^2(I)}+\|u(t_n)\|_{L^\infty(I)})=0.
\end{equation}
Moreover, one has the Morawetz-type estimate
\begin{equation}\label{eq3}
	\frac{1}{T}\int_{0}^{T}\int_{\R}\frac{|xK^{\prime}(x)|}{1+|x|}|u|^{2\sigma+2}dxdt<\frac{R}{T}+\frac{1}{R^2}
\end{equation}
for any $T>0$ and $R>0$.
\item[(2)] For $2-b\!<\!\sigma\!<\!\infty$, if $K$ satisfies $(K_2)$, and there exists a small $\epsilon>0$ such that $\|u_{0}\|_{H^1(\R)}\leq\epsilon$, then there exists a sequence of times $t_n\rightarrow\infty$ such that for any bounded interval $I\subset\R$, \eqref{eq2} holds. Additionally, one has the Morawetz-type estimate
\begin{equation}\label{eq1.5}
		\frac{1}{T}\int_{0}^{T}\int_{|x|\leq R}|x|^{-b}|u|^{2\sigma+2}dxdt\lesssim\frac{R}{T}+\frac{1}{R^b},
\end{equation}for any $T>0$ and sufficiently large $R>0$.
\item [(3)] For $0\!<\!\sigma\!\leq\!2-b$, if $K$ satisfies $(K_3)$ and  $\|u_{0}\|_{H^1(\R)}\leq\epsilon$ for some small $\epsilon>0$, then there exists a sequence of times $t_n\rightarrow\infty$ such that for any bounded interval $I\subset\R$, \eqref{eq2} holds. Additionally, one has the Morawetz-type estimate
\begin{equation}\label{eq1.2.3}
\frac{1}{T}\int_{0}^{T}\int_{|x|\leq R}|x|^{-b}(1+|x|)^{-2+b}|u|^{2\sigma+2}dxdt\lesssim\frac{R}{T}+\frac{1}{R^2},
\end{equation}for any $T>0$ and sufficiently large $R>0$.
\end{itemize}
\end{thm}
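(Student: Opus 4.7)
The plan is to derive the three Morawetz-type estimates \eqref{eq3}, \eqref{eq1.5} and \eqref{eq1.2.3} by designing, in each case, a localized weight $\phi_R$ adapted to the pointwise bounds on $K$, and then to extract the local decay \eqref{eq2} by Chebyshev's inequality followed by a 1D Sobolev interpolation along a time subsequence. For any smooth, even weight $\phi=\phi_R$ depending on the truncation scale $R\gg 1$, I introduce the Morawetz functional
\[
M_{\phi}(t)=2\,\mathrm{Im}\!\int_\R \bar u(t,x)\,\phi'(x)\,u_x(t,x)\,dx,
\]
which satisfies $|M_{\phi}(t)|\lesssim \|\phi'\|_\infty\|u_0\|_{H^1}^2$ by Cauchy--Schwarz together with the conservation of mass and energy. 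A standard computation from \eqref{eq1} with $\mu=0$ yields the Virial identity
\[
\frac{d}{dt}M_{\phi}=4\!\int\phi''|u_x|^2\,dx-\!\int\phi''''|u|^2\,dx+\frac{2\sigma}{\sigma+1}\!\int\phi'' K|u|^{2\sigma+2}\,dx-\frac{2}{\sigma+1}\!\int\phi' K'|u|^{2\sigma+2}\,dx,
\]
so the whole game is to pick $\phi_R$ so that, after time-integration and division by $T$, the desired weighted integral sits on one side while the other side is bounded by $R/T+R^{-\alpha}$.

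For part (1), the assumption $(K_1)$ gives $K\ge 0$ and $xK'\le 0$, and I would take $\phi_R$ with $\phi_R'(x)=R\tanh(x/R)$; then $\phi_R'$ is odd of the sign of $x$, $\|\phi_R'\|_\infty\le R$, $\phi_R''\ge 0$, $|\phi_R''''|\lesssim R^{-2}$, and $|\phi_R'(x)|\gtrsim |x|/(1+|x|)$ for $R$ large. Every nonlinear and kinetic piece in the Virial identity is nonnegative, so rearranging and integrating gives
\[
\tfrac{2}{\sigma+1}\!\int_0^T\!\!\int|\phi_R'||K'||u|^{2\sigma+2}\,dx\,dt\le \bigl|M_{\phi_R}(T)-M_{\phi_R}(0)\bigr|+\int_0^T\!\!\int |\phi_R''''|\,|u|^2\,dx\,dt\lesssim R+T R^{-2},
\]
and the pointwise lower bound on $|\phi_R'|$ turns this into \eqref{eq3}. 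In parts (2)--(3) the sign hypotheses on $K,K'$ are lost, so I would instead choose $\phi_R''(x)\approx|x|^{-b}\chi(x/R)$ in part (2) and $\phi_R''(x)\approx|x|^{-b}(1+|x|)^{-2+b}\chi(x/R)$ in part (3), for a standard bump $\chi$, so that (after replacing $K$ by $|K|$) the term $\frac{2\sigma}{\sigma+1}\int\phi_R''K|u|^{2\sigma+2}$ reproduces the target integrand on $|x|\le R$. The sign-indefinite remainder, together with $-\frac{2}{\sigma+1}\int\phi_R'K'|u|^{2\sigma+2}$, is absorbed into the positive kinetic contribution $4\int\phi_R''|u_x|^2$ via 1D Gagliardo--Nirenberg and the smallness $\|u_0\|_{H^1}\le\epsilon$. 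The boundary term contributes $R$ and the bi-Laplacian term contributes $TR^{-b}$ (resp.\ $TR^{-2}$), yielding \eqref{eq1.5} and \eqref{eq1.2.3} after division by $T$.

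To pass from Morawetz to the local decay \eqref{eq2}, fix a bounded $I\subset\R$ and choose $R=R(T)\to\infty$ so that the right-hand side of the relevant Morawetz estimate tends to zero (e.g.\ $R=T^{1/3}$ in part (1)). Since the weight multiplying $|u|^{2\sigma+2}$ on the left is bounded below by a positive constant on $I$, Chebyshev's inequality produces a sequence $t_n\to\infty$ along which $\|u(t_n)\|_{L^{2\sigma+2}(I)}\to 0$. H\"older's inequality with the uniform $H^1$ bound then gives $\|u(t_n)\|_{L^2(I)}\to 0$, and the 1D interpolation $\|u\|_{L^\infty(I)}\lesssim\|u\|_{L^2(I)}^{1/2}\|u\|_{H^1}^{1/2}$ closes out \eqref{eq2}. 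The main obstacle will be parts (2)--(3): the absence of sign information on $K,K'$ forces a delicate absorption of sign-indefinite contributions via the $H^1$-smallness, whereas part (1) is cleaner because $(K_1)$ makes every term in the Virial identity cooperate.
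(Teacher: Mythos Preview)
Your outline for part~(1)'s Morawetz estimate is essentially the paper's argument (they take $\phi$ with $\phi'=1$ on $|x|\le R/2$ rather than $R\tanh(x/R)$, but the mechanism is identical). However, two genuine gaps remain.

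\medskip
\textbf{Parts (2)--(3): the weight $\phi_R''\approx|x|^{-b}$ does not work.} If $\phi_R''(x)\sim|x|^{-b}\chi(x/R)$ near the origin, then $\phi_R''''(x)\sim|x|^{-b-2}$ there, and since $b+2>1$ this is not locally integrable; the term $\int\phi_R''''|u|^2\,dx$ is infinite for any $u$ with $u(0)\neq0$, so your ``bi-Laplacian term contributes $TR^{-b}$'' claim is false. There is also a bookkeeping slip: with $|K|\lesssim|x|^{-b}$ the product $\phi_R''K$ is $\sim|x|^{-2b}$, not the target weight $|x|^{-b}$. The paper avoids both problems by keeping the \emph{same} weight as in part~(1), namely $\phi_x\equiv1$ on $|x|\le R/2$, and proving a localized Gagliardo--Nirenberg inequality (their Lemma~3.2):
\[
\int_{|x|\le R}\phi_x|u_x|^2\,dx\;\ge\;\frac{C}{\|u\|_{H^1}^{4-2b}}\int_{|x|\le R/2}|x|^{-b}|u|^{6-2b}\,dx\;-\;\frac{c}{R^2}\|u\|_{H^1}^2.
\]
Combined with $\|u\|_{L^\infty}\lesssim\|u\|_{H^1}$ and the smallness $\|u\|_{H^1}\le\delta$, this makes the kinetic term dominate the sign-indefinite nonlinear pieces $\int_{|x|\le R/2}|x|^{-b}|u|^{2\sigma+2}$ (case~(2)) or $\int_{|x|\le R/2}|x|^{-b}(1+|x|)^{-2+b}|u|^{2\sigma+2}$ (case~(3), where an additional one-dimensional Hardy-type inequality on $\{|x|>1\}$ is invoked). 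Your idea ``absorb via 1D Gagliardo--Nirenberg and smallness'' is exactly right in spirit, but it must be executed with the flat weight and this localized G--N, not by building the singularity into $\phi''$.

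\medskip
\textbf{Decay extraction in part (1): the nonlinear Morawetz term can vanish.} Nothing in $(K_1)$ forces $K'\neq0$ pointwise; $K$ may be constant on an interval $I_0$, in which case the weight $|xK'(x)|/(1+|x|)$ in \eqref{eq3} is identically zero on $I_0$ and your Chebyshev argument gives no information about $\|u(t_n)\|_{L^{2\sigma+2}(I_0)}$. The paper instead extracts decay from the kinetic term, which is always present: from the Virial identity under $(K_1)$ one has $\int_{|x|\le R}\phi_x|u_x|^2\lesssim -\frac{d}{dt}I(u)+R^{-2}$, and Poincar\'e on $\{|x|\le R/2\}$ (applied to $\phi_x u$) gives
\[
\int_{|x|\le R/2}|u|^2\,dx\;\lesssim\;R^2\int_{|x|\le R}\phi_x|u_x|^2\,dx\;\lesssim\;-R^2\frac{d}{dt}I(u)+1.
\]
Integrating in $t$ and choosing $R=T^{\theta}$ suitably yields $\int_0^T\int_{|x|\le R/2}|u|^2\lesssim T^\alpha$ with $\alpha<1$, hence a single sequence $t_n\to\infty$ with $\|u(t_n)\|_{L^2(I)}\to0$ for \emph{every} bounded $I$. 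Your $L^\infty$ step via interpolation then closes correctly. The same kinetic-term route is what the paper uses in parts~(2)--(3) as well.
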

Although Theorem \ref{theorem{1}} only provides the decay estimate for a subsequence of times $t_n\rightarrow\infty$, it can be applied to some scattering problems, that is, under certain conditions, we can apply it to  exclude the non-scatting solutions (the details can be found at the end of Sect. 3.1).

 Note that if the initial data $u_0$ is odd, then the solution $u(t,x)$ of \eqref{eq1} is also odd. In particular, we have $u(t,0)\equiv0$.  Using this property, we have the following strengthened result.

 \begin{thm}\label{T1}
     	Assume $\mu=0$, $0<\sigma<\infty$, and $u(t)\in H^1(\R)$ is a global odd solution of \eqref{eq1} with odd initial data $u_0\in H^1(\R)$. We have
		\begin{itemize}
			\item[(1)]  If $K$ satisfies $(K_1)$, then for any bounded interval $I\subset\R$,
			\begin{equation}\label{eq decay}
				\lim_{t\rightarrow\infty}(\|u(t)\|_{L^2(I)}+\|u(t)\|_{L^\infty(I)})=0.
			\end{equation}
       Moreover, one has the Morawetz-type estimate
			\begin{equation}\label{eq Mod}
				\int_{-\infty}^{\infty}\int_{\R}\frac{|xK^{\prime}(x)|}{1+|x|}|u|^{2\sigma+2}dxdt<\infty.
			\end{equation}
			\item[(2)]   If $K$ satisfies $(K_4)$ and there exists a small $\epsilon>0$  such that $\|u_{0}\|_{H^1(\R)}\leq\epsilon$, then for any bounded interval $I\subset\R$, \eqref{eq decay} holds, and one has the Morawetz-type estimate
			\begin{equation}\label{eq Mof}
				\int_{-\infty}^{\infty}\int_{\R}|x|^{-b}(1+|x|)^{-4}|u|^{2\sigma+2}dxdt<\infty.
			\end{equation}
		\end{itemize}
\end{thm}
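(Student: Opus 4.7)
My plan is to run the Virial--Morawetz scheme used for Theorem \ref{theorem{1}}, but to exploit $u(t,0)\equiv 0$ (which holds whenever the initial datum is odd) in order to upgrade the subsequential ``$R/T$-type'' estimates into uniformly time-integrable ones. With an even weight $\phi$ to be chosen, I would define
\[
M_\phi(t)\;=\;2\,\mathrm{Im}\int_{\mathbb R}\phi'(x)\,\bar u(t,x)\,\partial_x u(t,x)\,dx.
\]
Using the equation $(i\partial_t+\partial_x^2)u=K|u|^{2\sigma}u$ and integrating by parts yields the standard identity
\[
M_\phi'(t)\;=\;4\!\int\!\phi''|u_x|^2 dx-\!\int\!\phi''''|u|^2 dx+\frac{2\sigma}{\sigma+1}\!\int\!\phi''K|u|^{2\sigma+2}dx-\frac{2}{\sigma+1}\!\int\!\phi'K'|u|^{2\sigma+2}dx,
\]
while mass and energy conservation give the uniform bound $|M_\phi(t)|\le 2\|\phi'\|_{\infty}\|u\|_{L^2}\|u_x\|_{L^2}\le C\|u_0\|_{H^1}^{2}$.

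For part (1) I would pick $\phi'(x)=x/(1+|x|)$, so that $\phi''(x)=(1+|x|)^{-2}$ and $\phi''''(x)=6(1+|x|)^{-4}$. Assumption $(K_1)$ gives $-\phi'K'=|xK'|/(1+|x|)\ge 0$ and $K\ge 0$, so three of the four terms in $M_\phi'(t)$ have the desired sign; the only obstruction is $-\int\phi''''|u|^2$, and here is exactly where oddness enters. Since $u(t,0)=0$, one has $|u(t,x)|^{2}\le |x|\int_{0}^{|x|}|u_y(t,y)|^{2} dy$, which combined with Fubini yields the weighted Hardy inequality
\[
\int_{\mathbb R}\frac{|u(t,x)|^2}{(1+|x|)^4} dx\;\le\;\tfrac12\int_{\mathbb R}\frac{|u_x(t,x)|^2}{(1+|x|)^2} dx.
\]
Consequently $\int\phi''''|u|^2\le 3\int\phi''|u_x|^2$, so the bad term is absorbed by a fraction of the good one, leaving
\[
M_\phi'(t)\;\ge\;\int\phi''|u_x|^2+\frac{2\sigma}{\sigma+1}\int\phi''K|u|^{2\sigma+2}+\frac{2}{\sigma+1}\int\frac{|xK'|}{1+|x|}|u|^{2\sigma+2}.
\]
All three integrands on the right being non-negative, integration in time together with the uniform bound on $M_\phi$ immediately yields \eqref{eq Mod}.

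Part (2) uses the same weight. The Hardy absorption of the $\phi''''$ term still gives $4\int\phi''|u_x|^2-\int\phi''''|u|^2\ge \int\phi''|u_x|^2$, but since $(K_4)$ provides no sign information on $K$ or $K'$, the nonlinear terms together only admit the upper bound
\[
\Bigl|\frac{2\sigma}{\sigma+1}\!\int\!\phi''K|u|^{2\sigma+2}-\frac{2}{\sigma+1}\!\int\!\phi'K'|u|^{2\sigma+2}\Bigr|\;\le\;C\!\int\!|x|^{-b}(1+|x|)^{-4}|u|^{2\sigma+2}dx.
\]
I would control this in two stages: first, by Sobolev embedding $\|u\|_{L^\infty}^{2\sigma}\le C\|u_0\|_{H^1}^{2\sigma}\le C\varepsilon^{2\sigma}$; second, by an analogous weighted Hardy inequality (again using $u(t,0)=0$ and $b<1$) of the form $\int|x|^{-b}(1+|x|)^{-4}|u|^2 dx\lesssim \int\phi''|u_x|^2 dx$. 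Together these bound the nonlinear piece by $C\varepsilon^{2\sigma}\int\phi''|u_x|^2$, which for $\varepsilon$ small enough is absorbed into the leading positive term, yielding $M_\phi'(t)\ge \tfrac12\int\phi''|u_x|^2$. Integrating in time bounds $\iint\phi''|u_x|^2$ globally, and feeding this back through the two-stage estimate produces \eqref{eq Mof}.

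Finally, to pass from either Morawetz estimate to the decay \eqref{eq decay}, I would argue that the map $t\mapsto \int \frac{|xK'|}{1+|x|}|u(t)|^{2\sigma+2}dx$ (respectively its $(K_4)$-analogue) is uniformly continuous in $t$ --- this follows from $u\in C(\mathbb R;H^1)$ and the equation controlling $\partial_t u$ in $H^{-1}$ --- so an $L^{1}_{t}$ integrable quantity of this form must tend to zero as $|t|\to\infty$; Sobolev embedding on any bounded interval $I$ then produces \eqref{eq decay}. The main obstacle, and the only genuinely new ingredient beyond the proof of Theorem \ref{theorem{1}}, is the weighted Hardy absorption step: it requires picking $\phi$ so that $\phi''''$ is dominated by $\phi''$ (tested against odd $H^{1}$ functions) with a constant strictly less than $4$. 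In the non-odd setting of Theorem \ref{theorem{1}} no such Hardy bound is available and the $\phi''''$ term can only be controlled in $L^{\infty}_{t}$, which is precisely what forces the $R/T$ loss there.
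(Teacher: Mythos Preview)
Your approach is essentially the paper's: the same weight $\phi'(x)=x/(1+|x|)$, the same Virial identity, and the same Hardy-type absorption exploiting $u(t,0)=0$. Your pointwise Hardy inequality $\int (1+|x|)^{-4}|u|^2\le\tfrac12\int(1+|x|)^{-2}|u_x|^2$ is exactly the content of the paper's Lemma~\ref{lem3.2} (there packaged via the quadratic form $B(w)$ and the identity $\int(1+|x|)^{-2}|w_x|^2=\int[((1+|x|)^{-1}w)_x]^2+2\int(1+|x|)^{-4}|w|^2$), and your treatment of part~(2) via $\|u\|_{L^\infty}^{2\sigma}$ plus a weighted Hardy bound mirrors Lemma~\ref{lem3.2.2}. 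The Morawetz estimates \eqref{eq Mod}--\eqref{eq Mof} then follow identically.

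The one place you diverge is the passage from the Morawetz bound to the decay \eqref{eq decay}, and here your sketch has a small gap. You propose to show that $t\mapsto\int\frac{|xK'|}{1+|x|}|u(t)|^{2\sigma+2}dx$ is uniformly continuous and $L^1_t$, hence tends to zero, and then invoke ``Sobolev embedding on $I$''. Two issues: (i) under $(K_1)$ nothing forbids $K'$ from vanishing on parts of $I$, so this particular weight need not be bounded below there; (ii) even if it were, you would only get $\|u(t)\|_{L^{2\sigma+2}(I)}\to 0$, and a Sobolev embedding alone does not upgrade this to $L^\infty(I)$ --- you need Gagliardo--Nirenberg combined with the uniform $H^1$ bound. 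Both points are easily repaired: from your own estimate you already have $\int_0^\infty\int\phi''|u_x|^2\,dt<\infty$, and your Hardy inequality converts this to $\int_0^\infty\int(1+|x|)^{-4}|u|^2\,dt<\infty$. The paper then takes the cleanest route: setting $\psi=(1+|x|)^{-4}$, one computes directly from the equation that $\frac{d}{dt}\int\psi|u|^2=\Im\int\psi_x\bar uu_x$, which is bounded by the $L^1_t$ quantity $\|u(t)\|_{H^1_\alpha}^2$; integrating from $t$ to a sequence $t_n\to\infty$ along which the weighted $H^1$ norm vanishes gives $\int\psi|u(t)|^2\to 0$ for \emph{every} $t\to\infty$, and then the $L^\infty(I)$ decay follows by the elementary argument $|u(t,x)|^2\le|u(t,\tilde x(t))|^2+2\|u(t)\|_{L^2(I)}\|u_x(t)\|_{L^2(I)}$. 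This avoids the uniform-continuity step altogether and is indifferent to possible zeros of $K'$.
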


\begin{remark}~
  \begin{itemize}
		 \item[(1)]   Notice that $|x|^{-b}$ and $-|x|^{-b}$ satisfy $(K_1)$ and $(K_2)$, respectively. Furthermore, $(K_2)$, $(K_3)$, and $(K_4)$ allow the change of sign of $K(x)$.

         \item[(2)] Although Theorem \ref{theorem{1}} and Theorem \ref{T1} give some decay results for $d\!=\!1$, the scattering of the solution in $H^1(\R)$ is still an unresolved problem. In \cite{AK2021},  some scattering results for one-dimensional INLS equations (with $|K(x)|=|x|^{-b}$  and $2-b<\sigma$) in $H^s$ with $0<s<1$ and $0<b<1-s$ have been obtained, but $s=1$ is not accessible.
		\end{itemize}
 \end{remark}
	
	\vspace{0.3cm}
	
 The following results concerns the INLS equation \eqref{eq1} with potential.

 \begin{thm}\label{theorem{2}}
Assume that  $V\in C^1(\R\setminus \{0\})$ is an even function and satisfies

\hspace{1cm} $(V)\qquad\qquad  \int_\R |xV^{\prime}(x)|(1+|x|)^2dx < \infty.$

\noindent Let $u\in C(\R;H^1(\R))$ be a global solution of \eqref{eq1} with odd initial data $u_0\in H^1(\R)$, where $\|u_0\|_{H^1(\R)}\leq\epsilon$ for some small $\epsilon>0$. Then, there exists a constant $\mu_0>0$ such that for any $\mu\in(0,\mu_0)$ and bounded interval $I\subset\R$,
\begin{equation}\label{eqv1}
			\liminf_{t\rightarrow\infty}(\|u(t)\|_{L^2(I)}+\|u(t)\|_{L^\infty(I)})=0,
\end{equation}
provided that one of the following conditions is satisfied:

(1) $2-b<\sigma<\infty$ and $K$ satisfies $(K_2)$ or $0<\sigma\leq 2-b$ and $K$ satisfies $(K_3)$.

(2) $0<\sigma<\infty$ and $K$ satisfies $(K_1)$ or $(K_4)$.

\noindent In particular, if (2) holds, \eqref{eqv1} can be strengthened to
\begin{equation}\label{eqv2}
			\lim_{t\rightarrow\infty}(\|u(t)\|_{L^2(I)}+\|u(t)\|_{L^\infty(I)})=0.
		\end{equation}
\end{thm}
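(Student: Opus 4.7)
The strategy is to run the odd-solution Virial--Morawetz machinery of Theorem \ref{T1} in the presence of the potential, exploiting the parity of odd $u$ against the even $V$ to cancel the leading potential contribution, and then to invoke Simon's coupling-threshold result on the odd subspace to upgrade the decay in case (2). First I would introduce the Morawetz functional
\[
M(t) = 2\operatorname{Im}\int_{\R}\phi'(x)\,\bar u(t,x)\,u_x(t,x)\,dx
\]
with an odd weight $\phi$ chosen exactly as in the relevant part of Theorem \ref{theorem{1}} (for case (1)) or Theorem \ref{T1} (for case (2)). Differentiating $M(t)$ along $iu_t=-u_{xx}+K|u|^{2\sigma}u+\mu V u$ yields the usual kinetic and nonlinear terms together with one extra potential contribution $-2\mu\int_\R \phi'(x)V'(x)|u|^2dx$. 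Since $u$ is odd, $|u|^2$ is even; since $\phi$ is odd, $\phi'$ is even; and since $V$ is even, $V'$ is odd. Hence the integrand is globally an odd function and the potential contribution \emph{vanishes identically}. Integrability of this term despite a possible singularity of $V'$ at $0$ is guaranteed by $(V)$ together with $u(t,0)=0$, which forces $|u(t,x)|^2=O(|x|)$ near the origin.

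With the potential term neutralized, the Morawetz identity reduces to exactly the one treated in Theorems \ref{theorem{1}} and \ref{T1}, and I can import those estimates verbatim. In case (1) this yields an averaged bound of the form \eqref{eq1.5} or \eqref{eq1.2.3}; combined with the uniform $H^1$--bound coming from the smallness of $u_0$ and Rellich's compactness on the bounded interval $I$, this extracts a sequence $t_n\to\infty$ realizing the $\liminf$ version \eqref{eqv1}. In case (2), conditions $(K_1)$ or $(K_4)$ yield the much stronger estimates \eqref{eq Mod}--\eqref{eq Mof}, which are summable over all of time, and it is at this stage that I would upgrade $\liminf$ to $\lim$.

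For the upgrade in case (2), I would argue by contradiction: if $\|u(t_n)\|_{L^\infty(I)}\geq\delta>0$ along some $t_n\to\infty$, then by the uniform $H^1$--bound I extract a weak limit $u(t_n)\rightharpoonup u_\infty$ in $H^1$, strongly in $L^\infty(I)$, with $u_\infty$ odd and nontrivial. The space-time summability in \eqref{eq Mod}/\eqref{eq Mof} forces the nonlinear density $K|u_\infty|^{2\sigma+2}$ to vanish, so by unique continuation $u_\infty$ is a nontrivial odd solution of the linear eigenvalue problem $-u_\infty''+\mu V u_\infty=\lambda u_\infty$, i.e.\ an odd bound state of $-\partial_x^2+\mu V$. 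At this point I would invoke Simon's theorem on the appearance of bound states for one-dimensional Schrödinger operators: on the odd subspace (equivalently, $[0,\infty)$ with Dirichlet boundary conditions at $0$) there exists a positive coupling threshold $\mu_0$ below which no bound state can appear, provided $V$ has the decay/integrability encoded in $(V)$. Choosing $\mu<\mu_0$ rules out the existence of $u_\infty$ and yields the desired contradiction, establishing \eqref{eqv2}.

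The technical heart of the proof, and the step I expect to be the main obstacle, is this weak-limit/bound-state argument in case (2): extracting the linear limiting equation from the nonlinear evolution in a rigorous way (without a full profile-decomposition machinery, which is delicate in one dimension and which the paper appears to avoid) and matching it with the precise formulation of Simon's threshold result for the class of potentials admitted by $(V)$, namely the inverse-power and Yukawa-type potentials mentioned in the abstract. The remaining ingredients -- the Morawetz derivative computation, the parity cancellation of the $V$-term, and the passage from the Morawetz bound to $\liminf=0$ via $H^1$ compactness on bounded intervals -- are largely routine once the odd symmetry has been correctly exploited.
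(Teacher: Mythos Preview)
Your central claim---that the potential contribution in the Virial identity vanishes by parity---is incorrect, and the error is fatal. With the odd weight $\phi$ taken from Theorems~\ref{theorem{1}} and~\ref{T1}, the functional you write, $M(t)=2\Im\int\phi'\,\bar u u_x\,dx$, is \emph{identically zero} on odd solutions: $\phi'$ is even, $\bar u$ is odd, $u_x$ is even, so the integrand is odd. Every term in $\frac{d}{dt}M$ (kinetic, dispersive, nonlinear, potential) vanishes by the same parity count, and you extract no information. The functional the paper actually uses is $I(u(t))=\Im\int\phi\,u\bar u_x\,dx$ with the odd weight $\phi$ itself inside; differentiating along \eqref{eq20} gives the potential term $-\mu\int_\R\phi\,V'(x)|u|^2\,dx$ (see Lemma~\ref{lem 4.1}). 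Here $\phi$ is odd, $V'$ is odd, $|u|^2$ is even, so the integrand is \emph{even} and the term does not vanish.

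The paper controls this nonvanishing term not by parity but by absorbing it into the kinetic part via Lemma~\ref{lemma2.1}. One sets $V_0=-|xV'(x)|$ (case~(1)) or $V_0=-x(1+|x|)V'(x)$ (case~(2)); condition~$(V)$ gives $\int(1+|x|)|V_0|\,dx<\infty$, so for $\mu<\mu_0$ the operator $-\partial_x^2+\mu V_0$ has at most one negative eigenvalue, whose eigenfunction is even. Hence on odd $w$ the quadratic form is nonnegative, i.e.\ $\mu\int|V_0|\,|w|^2\,dx\le\int|w_x|^2\,dx$, and applying this with $w=\phi_x u$ (case~(1)) or $w=(1+|x|)^{-1}u$ (case~(2)) absorbs the potential term into the positive kinetic contribution. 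Your proposed upgrade from $\liminf$ to $\lim$ via weak limits and a linear bound-state contradiction is also not the paper's route and would be hard to justify (passing to a limit in the nonlinear flow does not produce a linear eigenvalue problem). Instead, once $-\frac{d}{dt}I(u(t))\gtrsim\|u(t)\|_{H^1_\alpha}^2$ is established as above, the paper integrates in time to get $\int_0^\infty\|u(t)\|_{H^1_\alpha}^2\,dt<\infty$ and then repeats the direct argument of Section~\ref{s3.2} on $\frac{d}{dt}\int(1+|x|)^{-4}|u|^2\,dx$ to obtain the full limit~\eqref{eqv2}. Simon's result is thus used \emph{inside} the Virial estimate, not at the end of a compactness argument.
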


\begin{remark}~
\begin{itemize}
\item[(1)] A noteworthy emphasis of Theorem \ref{theorem{2}} is freedom from imposing specific spectral properties on the operator	$-\partial_x^2\pm\mu V$. Instead, it relies solely on the hypothesis $(V)$, evading non-resonance conditions. This allows for the realization of \eqref{eqv1} and \eqref{eqv2} by some bound state of Schr\"{o}dinger operators \cite{Klaus1977, Simon1976}.		
			
\item[(2)]  The external potentials with condition $(V)$, contain the inverse power forms like $$V(x)=|x|^{-m}(1+|x|)^{-n},~1>m\geq 0,~n+m>2,$$
			and the Yukawa-type potential
			$$V(x)=|x|^{-m}e^{-n|x|},~m\in(0,1),~n>0.$$
In comparison, Mart\'{\i}nez \cite{M2020No} has dealt with a Schwartz even function, and Dinh \cite{Dinh2021-4} has studied the energy scattering for both focusing and defocusing INLS equations with Kato-type potentials and radially symmetric initial data for $d=3$. Campos and Guzm\'an \cite{CG2021} investigated the INLS equations with inverse-square potential for $d\geq3$.
\end{itemize}
\end{remark}

 The structure of this paper is as follows: Sect. 2 introduces the necessary notations and some preliminaries; Sect. 3 gives the proof of Theorem \ref{theorem{1}} and \ref{T1}; Sect. 4 provides the proof of Theorem \ref{theorem{2}}. 
	
\section{Preliminaries}
	
From now on, we will use the notation $A\lesssim B$ to mean $A\leq CB$ for some constant $C > 0$, and $C$ may change from line to line. $\Re z$ will denote the real part of the complex number $z$ and $\Im z$ the imaginary part. For a complex function $u$, we will denote $\Re u$ by  $u_1$ and $\Im u$ by  $u_2$. According to the context, $f^{\prime}$ or $f_x$ will denote the derivative of $f$ with respect to $x$ and $f^{(\alpha)}$ will denote the $\alpha$-th derivative of $f$ with respect to $x$. The subspace $H^1_{odd}(\R)$ is defined as
	$$H^1_{odd}(\R)=\{u\in H^1(\R)~|~u~ is~ odd\}.$$
Let $\alpha(x) \geq 0$ be a function, we set
\begin{equation*}
		\|u(t)\|^2_{L^2_\alpha(\R)}:= \int_{\R}\alpha(x)|u(t,x)|^2dx
\end{equation*}
and
\begin{equation*}
		\|u(t)\|^2_{H^1_\alpha(\R)}:= \int_{\R}\alpha(x)(|u_x(t,x)|^2+|u(t,x)|^2)dx
\end{equation*}
to define the $L^2$-  and $H^1$-norm with weight $\alpha(x)$, respectively.
\begin{lemma}[\cite{Simon1976},  \cite{Klaus1977}]\label{lemma2.1}
Let $V_0$ be a nonzero potential that satisfies
\begin{equation}\label{eq5}
			\int_\R(1+|x|)|V_0(x)|dx<\infty,
\end{equation}
and $\mu>0$ is sufficiently small. 	Then, $-\frac{d^2}{dx^2}+\mu V_0$ has a unique negative eigenvalue if and only if \begin{equation*}
			\int_\R V_0(x)dx \,\leq0.
\end{equation*}
Moreover, if $\int_\R V_0(x)>0$, then  $-\frac{d^2}{dx^2}+\mu V_0$ has no negative eigenvalue.
\end{lemma}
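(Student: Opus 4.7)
The plan is to follow the classical Birman--Schwinger approach of Simon \cite{Simon1976} and Klaus \cite{Klaus1977}, which turns the eigenvalue question for $H_\mu:=-\partial_x^2+\mu V_0$ into a spectral perturbation problem for a compact operator in the weak-coupling limit. Setting $u=|V_0|^{1/2}$ and $v=\mathrm{sgn}(V_0)|V_0|^{1/2}$ (so $uv=V_0$), introduce
$$K_\kappa:=u\,(-\partial_x^2+\kappa^2)^{-1}\,v,\qquad\kappa>0.$$
The 1D free resolvent has the explicit kernel $(2\kappa)^{-1}e^{-\kappa|x-y|}$, and together with the hypothesis $\int(1+|x|)|V_0|\,dx<\infty$ this makes $K_\kappa$ Hilbert--Schmidt; by the cyclic identity $\sigma(K_\kappa)\setminus\{0\}=\sigma(G_\kappa^{1/2}V_0G_\kappa^{1/2})\setminus\{0\}$ its nonzero spectrum is real. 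The Birman--Schwinger principle then states that $-\kappa^2$ is an eigenvalue of $H_\mu$ iff $-1/\mu\in\sigma(K_\kappa)$, and standard monotonicity in $\kappa$ identifies the total number of negative eigenvalues of $H_\mu$ with the limiting count of eigenvalues of $\mu K_\kappa$ that are $\le -1$ as $\kappa\to 0^+$.

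Next, Taylor expand the resolvent kernel,
$$\frac{e^{-\kappa|x-y|}}{2\kappa}=\frac{1}{2\kappa}-\frac{|x-y|}{2}+O\bigl(\kappa(x-y)^2\bigr),$$
so that $K_\kappa=\tfrac{1}{2\kappa}P+L+R_\kappa$, where $P:=\langle v,\cdot\rangle u$ is rank one with unique nonzero eigenvalue $\langle v,u\rangle=\int V_0$; $L$ is the Hilbert--Schmidt operator with kernel $-\tfrac12 u(x)|x-y|v(y)$, whose boundedness is exactly what the weight $\int(1+|x|)|V_0|\,dx<\infty$ supplies; and $\|R_\kappa\|\to 0$ as $\kappa\to 0^+$. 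When $\int V_0>0$, the only divergent eigenvalue of $\tfrac{1}{2\kappa}P$ is $\tfrac{\int V_0}{2\kappa}>0$ while the remaining spectrum of $K_\kappa$ is perturbed by the bounded $L+R_\kappa$ only around $0$; Weyl's inequality for the self-adjoint companion shows that no eigenvalue of $\mu K_\kappa$ can reach $-1$ for small $\mu$, so $H_\mu$ has no negative eigenvalue. Symmetrically, when $\int V_0<0$, the divergent eigenvalue $\tfrac{\mu\int V_0}{2\kappa}\to-\infty$ crosses $-1$ at exactly one value $\kappa_\star(\mu)\sim-\tfrac\mu2\int V_0$, while all other eigenvalues of $\mu K_\kappa$ stay $O(\mu)$ and never reach $-1$, producing exactly one bound state.

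The main obstacle is the borderline case $\int V_0=0$ with $V_0\not\equiv 0$: the unique nonzero eigenvalue of $P$ vanishes, $P$ becomes rank-one nilpotent ($P^2=0$), and the leading-order term of $K_\kappa$ contributes no eigenvalue at all. Since $\int V_0=0$ and $V_0\not\equiv 0$ force $V_0$ to change sign, $u$ and $v$ are linearly independent in $L^2$, and the plan is to perform a Grushin/Schur-complement reduction to the two-dimensional subspace $W:=\mathrm{span}\{u,v\}$, using that the complementary block of $K_\kappa$ is uniformly bounded. The reduced $2\times 2$ problem mixes the nilpotent $P$-block with the entries of $L$, in particular the strictly negative quantity
$$\langle u,Lv\rangle=-\tfrac12\iint|V_0(x)|\,|x-y|\,|V_0(y)|\,dx\,dy<0.$$
Passing to the self-adjoint companion $G_\kappa^{1/2}V_0G_\kappa^{1/2}$ to preserve reality of the spectrum and solving the resulting effective secular equation, one obtains a root behaving like $-C\mu/\sqrt{\kappa}\to-\infty$, which still crosses $-1$ at exactly one (exponentially small in $1/\mu$) value $\kappa_\star>0$; the remaining eigenvalues of $\mu K_\kappa$ are $O(\mu)$ and stay above $-1$. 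This again gives a unique negative eigenvalue of $H_\mu$. Throughout, the hypothesis $\int(1+|x|)|V_0|\,dx<\infty$ is exactly what keeps $L$ Hilbert--Schmidt, kills $R_\kappa$ uniformly, and makes the Grushin reduction convergent, explaining why $(1+|x|)$-moment integrability is the natural sharp condition.
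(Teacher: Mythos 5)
The paper does not actually prove this lemma---it is quoted from the cited references---and your Birman--Schwinger outline is indeed the method of Simon and Klaus, with the right case structure ($\int V_0>0$, $<0$, $=0$). But there is a genuine gap at the decisive analytic step: the decomposition $K_\kappa=\tfrac{1}{2\kappa}P+L+R_\kappa$ as you set it up only works under Simon's hypothesis $\int(1+x^2)|V_0|\,dx<\infty$, not under the first-moment condition \eqref{eq5} assumed in the lemma. The operator $L$ with kernel $-\tfrac12 u(x)|x-y|v(y)$ has squared Hilbert--Schmidt norm $\tfrac14\iint|V_0(x)|\,|x-y|^2|V_0(y)|\,dx\,dy$, which is finite precisely when the \emph{second} moment of $|V_0|$ is; under \eqref{eq5} alone it need not even be bounded on $L^2$ (the natural domination $|x-y|\le|x|+|y|$ splits $L$ into the two rank-one kernels $|x|u(x)v(y)$ and $u(x)|y|v(y)$, whose boundedness again requires $|x|u\in L^2$, i.e.\ $\int x^2|V_0|\,dx<\infty$). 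The same issue defeats the claim $\|R_\kappa\|\to0$, since the Taylor remainder $O(\kappa(x-y)^2)$ is controlled in operator norm only by second moments. Since the entire point of citing Klaus alongside Simon---as the Remark following the lemma makes explicit---is the weakening from $\int(1+x^2)|V_0|$ to $\int(1+|x|)|V_0|$, your argument as written proves a strictly weaker statement than the one asserted.

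The repair is Klaus's different splitting of the resolvent kernel,
\begin{equation*}
\frac{e^{-\kappa|x-y|}}{2\kappa}=\frac{e^{-\kappa(|x|+|y|)}}{2\kappa}+\frac{e^{-\kappa|x-y|}-e^{-\kappa(|x|+|y|)}}{2\kappa},
\end{equation*}
in which the first summand is \emph{exactly} rank one for every $\kappa>0$ (no expansion needed, and it converges to $\tfrac{1}{2\kappa}P$ plus a correction controlled by first moments), while the second satisfies $0\le e^{-\kappa|x-y|}-e^{-\kappa(|x|+|y|)}\le\kappa\bigl(|x|+|y|-|x-y|\bigr)\le 2\kappa\min(|x|,|y|)\le 2\kappa\sqrt{|x|\,|y|}$, so the corresponding piece of $K_\kappa$ is dominated, uniformly in $\kappa$, by a rank-one kernel of norm $\int|x|\,|V_0(x)|\,dx$---this is exactly where \eqref{eq5} enters, and one passes to $\kappa\to0^+$ by dominated convergence. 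With that replacement your three cases go through. Two smaller corrections: in the borderline case $\int V_0=0$ the secular equation $\lambda^2\approx\tfrac{1}{2\kappa}\|V_0\|_{L^1}\langle u,Lv\rangle$ gives a crossing at $\kappa_\star\sim c\mu^2$ (hence $e(\mu)\sim-c\mu^4$), which is polynomially, not exponentially, small in $\mu$; and the uniqueness of the crossing for sign-changing $V_0$ requires an argument for monotonicity of the relevant Birman--Schwinger eigenvalue in $\kappa$, which should be run on the self-adjoint companion $G_\kappa^{1/2}V_0G_\kappa^{1/2}$ rather than asserted for the non-normal $K_\kappa$.
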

\begin{remark}
Simon \cite{Simon1976} firstly introduced the condition  $\int_\R(1+x^2)|V_0(x)|dx<\infty$ to guarantee the result in Lemma \ref{lemma2.1}, this condition was improved to \eqref{eq5} by Klaus \cite{Klaus1977}. In addition, if $V_0$ is an even function, then such a unique negative eigenvalue is associated with an even eigenfunction.
\end{remark}

 Now, we list the following Gagliardo-Nirenberg inequalities, which can be found in Theorem 1.2 and Remark 1.3 of \cite{Farah2016}.
 \begin{lemma}[Gagliardo-Nirenberg inequality]
		For $0<b<1$ and $u\in H^1(\R)$, the following inequalities hold:
	\begin{equation}\label{eq6}
			\int_{\R}|x|^{-b}|u(x)|^{6-2b}dx \leq C_{GN} \| u_x\|_{L^2(\R)}^{2}\|u\|_{L^2(\R)}^{4-2b},
	\end{equation}
   \begin{equation}\label{eq6.1}
      \int_{\R}|x|^{-b}|u(x)|^{2}dx \lesssim \| u_x\|_{L^2(\R)}^{2}+\|u\|_{L^2(\R)}^{2} .
  \end{equation}
\end{lemma}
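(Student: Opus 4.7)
The plan is to prove the two weighted Gagliardo--Nirenberg inequalities by splitting the real line at a suitable cutoff radius and combining the standard one-dimensional tools $\|u\|_{L^\infty(\R)}^2\leq 2\|u_x\|_{L^2}\|u\|_{L^2}$ (obtained from the identity $u(x)^2=2\int_{-\infty}^{x}uu_x\,dy$ together with Cauchy--Schwarz) and the usual unweighted Gagliardo--Nirenberg inequality $\|u\|_{L^p}^{p}\lesssim\|u_x\|_{L^2}^{\,p/2-1}\|u\|_{L^2}^{\,p/2+1}$ for $p\geq2$. The crucial structural observation is that $|x|^{-b}$ is locally integrable at the origin (since $b<1$), which compensates for the singular weight near zero, while far from the origin $|x|^{-b}$ is small and loses into an unweighted Lebesgue norm.

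For the easier inequality \eqref{eq6.1}, I would use a fixed cutoff at $|x|=1$. On $\{|x|>1\}$ the weight obeys $|x|^{-b}\leq 1$, contributing $\|u\|_{L^2}^2$. On $\{|x|\leq 1\}$, I would bound $|u|^2\leq \|u\|_{L^\infty}^2$ and compute $\int_{|x|\leq1}|x|^{-b}dx=2/(1-b)$, then apply the Sobolev embedding $\|u\|_{L^\infty}^2\lesssim\|u_x\|_{L^2}\|u\|_{L^2}\leq\|u_x\|_{L^2}^2+\|u\|_{L^2}^2$ via Young's inequality.

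The scaling-invariant inequality \eqref{eq6} is more delicate, so I would keep a free cutoff $R>0$ and balance the two contributions. On $\{|x|\leq R\}$, H\"older and the Nash-type bound give
\begin{equation*}
\int_{|x|\leq R}|x|^{-b}|u|^{6-2b}dx\lesssim R^{1-b}\|u\|_{L^\infty}^{6-2b}\lesssim R^{1-b}\|u_x\|_{L^2}^{3-b}\|u\|_{L^2}^{3-b}.
\end{equation*}
On $\{|x|>R\}$, using $|x|^{-b}\leq R^{-b}$ and the standard Gagliardo--Nirenberg inequality applied with exponent $p=6-2b$ yields
\begin{equation*}
\int_{|x|>R}|x|^{-b}|u|^{6-2b}dx\lesssim R^{-b}\|u_x\|_{L^2}^{2-b}\|u\|_{L^2}^{4-b}.
\end{equation*}
Differentiating the sum in $R$ locates the minimizer at $R\sim\|u\|_{L^2}/\|u_x\|_{L^2}$; substituting this value collapses both contributions to exactly $\|u_x\|_{L^2}^{2}\|u\|_{L^2}^{4-2b}$, which is the claimed bound. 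The degenerate case $\|u_x\|_{L^2}=0$ forces $u\equiv 0$ in $H^1(\R)$, so the inequality is trivial there.

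The main obstacle is identifying the correct splitting scale. A fixed cutoff such as $R=1$ would produce only the weaker, non-scaling-invariant estimate $\|u_x\|_{L^2}^{3-b}\|u\|_{L^2}^{3-b}+\|u_x\|_{L^2}^{2-b}\|u\|_{L^2}^{4-b}$, insufficient for the virial and energy estimates used later in the paper. The scaling symmetry shared by both sides of \eqref{eq6} under $u(x)\mapsto\lambda^{1/2}u(\lambda x)$ forces the cutoff to be the intrinsic length $\|u\|_{L^2}/\|u_x\|_{L^2}$, and recognizing this match between the splitting scale and the scaling invariance is the heart of the argument.
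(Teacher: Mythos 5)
Your proof is correct. The paper itself does not prove this lemma; it simply cites Theorem 1.2 and Remark 1.3 of Farah \cite{Farah2016}, where \eqref{eq6} is established in general dimension by a variational (Weinstein-type) argument that also identifies $C_{GN}$ as the sharp constant attained by a ground state. Your route is genuinely different and entirely self-contained: you split $\R$ at a cutoff $R$, use local integrability of $|x|^{-b}$ (which needs $b<1$) together with the one-dimensional Agmon bound $\|u\|_{L^\infty}^2\leq 2\|u_x\|_{L^2}\|u\|_{L^2}$ near the origin and the unweighted Gagliardo--Nirenberg inequality away from it, and then optimize over $R$; the choice $R\sim\|u\|_{L^2}/\|u_x\|_{L^2}$ indeed makes both pieces equal to $\|u_x\|_{L^2}^2\|u\|_{L^2}^{4-2b}$ (both exponent computations check out, and the degenerate case $u_x\equiv0$ is handled correctly). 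What your argument buys is elementarity and transparency specific to $d=1$, at the cost of not producing the optimal constant --- which is immaterial here, since the paper only ever uses \eqref{eq6} and \eqref{eq6.1} with an unspecified constant (e.g.\ in Lemma \ref{lem3.2-1} and in the proof of Lemma \ref{lem3.2.2}). The fixed-cutoff argument you give for \eqref{eq6.1} is likewise correct and matches the non-scale-invariant form of that estimate.
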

 
Let $w\in H^1(\R)$ be a real-valued function. Define a quadratic form:
	\begin{equation*}
		B(w)=-\frac{1}{2}\int_{\R}\frac{6}{(1+|x|)^4}|w|^2dx+2\int_{\R}\frac{1}{(1+|x|)^2}|w_x|^2dx.
	\end{equation*}
	\begin{lemma}\label{lem3.2}
		Let $u=u_1+iu_2~ and~ u_1, u_2 \in H^1_{odd}(\R)$, then for $\alpha(x)=\frac{1}{(1+|x|)^4}$, there is a constant $C>0$ such that
		\begin{equation}\label{eq12}
			\|u\|^2_{H^1_\alpha(\R)}\leq C(B(u_1)+B(u_2)).
		\end{equation}
	\end{lemma}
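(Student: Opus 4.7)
}

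The plan is to reduce everything to a single weighted Hardy-type inequality valid for odd $H^1$ functions, namely
\begin{equation}\label{eq:hardyplan}
\int_{\R}\frac{|w(x)|^2}{(1+|x|)^4}\,dx \;\le\; \frac{1}{2}\int_{\R}\frac{|w_x(x)|^2}{(1+|x|)^2}\,dx \qquad \text{for all odd } w\in H^1(\R).
\end{equation}
Since $B(w)$ is precisely $-3$ times the left-hand side of \eqref{eq:hardyplan} plus $2$ times its right-hand side, inequality \eqref{eq:hardyplan} is tailored so that a small positive fraction of the gradient term survives after absorbing the bad negative term.

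To obtain \eqref{eq:hardyplan}, I would exploit oddness, which forces $w(0)=0$, and write $w(x)=\int_0^x w_x(s)\,ds$ for $x>0$. Cauchy--Schwarz gives $|w(x)|^2\le x\int_0^x |w_x(s)|^2 ds$; multiplying by $(1+x)^{-4}$, integrating over $(0,\infty)$, and swapping the order of integration via Fubini yields
\[
\int_0^\infty \frac{|w|^2}{(1+x)^4}\,dx \;\le\; \int_0^\infty |w_x(s)|^2 \left(\int_s^\infty \frac{x}{(1+x)^4}\,dx\right)ds.
\]
A direct computation shows $\int_s^\infty x(1+x)^{-4}dx = \tfrac{1}{2(1+s)^2} - \tfrac{1}{3(1+s)^3} \le \tfrac{1}{2(1+s)^2}$, which gives \eqref{eq:hardyplan} on the half line. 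Since $|w|^2$ and $|w_x|^2$ are both even whenever $w$ is odd, the full-line inequality follows.

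Applying \eqref{eq:hardyplan} to $u_1$ and $u_2$ separately and summing, the terms $-3\int (|u_1|^2+|u_2|^2)(1+|x|)^{-4}dx = -3\int |u|^2(1+|x|)^{-4}dx$ are dominated by $\tfrac{3}{2}\int |u_x|^2(1+|x|)^{-2}dx$, leaving
\[
B(u_1)+B(u_2)\;\ge\; \frac{1}{2}\int_{\R}\frac{|u_x|^2}{(1+|x|)^2}\,dx.
\]
By splitting the $-3$ as $-4+1$ and using \eqref{eq:hardyplan} against the $-4$ part only, I similarly obtain
\[
B(u_1)+B(u_2)\;\ge\; \int_{\R}\frac{|u|^2}{(1+|x|)^4}\,dx.
\]
Finally, since $(1+|x|)^{-4}\le (1+|x|)^{-2}$, the first estimate controls $\int |u_x|^2(1+|x|)^{-4}dx$ by $2(B(u_1)+B(u_2))$, so adding the two bounds yields $\|u\|^2_{H^1_\alpha(\R)} \le 3(B(u_1)+B(u_2))$, proving the lemma with $C=3$.

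The only non-trivial step is the weighted Hardy inequality \eqref{eq:hardyplan}; everything else is a short arithmetic rearrangement. The principal subtlety is verifying that the constant $\tfrac12$ there (coming from the kernel integral $\int_s^\infty x(1+x)^{-4}dx$) is strictly better than $\tfrac{2}{3}$, which is exactly what allows the negative term $-3\int |w|^2(1+|x|)^{-4}$ in $B(w)$ to be absorbed with room to spare; had the sharp constant been worse than $\tfrac{2}{3}$, the coercivity $B(u_1)+B(u_2)\gtrsim \|u\|^2_{H^1_\alpha}$ would fail and one would need to exploit more structure of the INLS problem.
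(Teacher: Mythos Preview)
Your proof is correct, and its overall architecture matches the paper's: both reduce the coercivity of $B$ to the weighted Hardy inequality \eqref{eq:hardyplan} for odd $w\in H^1(\R)$, then perform the same arithmetic absorption (the paper also uses $(1+|x|)^{-4}\le(1+|x|)^{-2}$ to control the weighted gradient term). The difference is in how \eqref{eq:hardyplan} itself is obtained. The paper expands the square $\int_\R\bigl[\partial_x\bigl(\tfrac{w}{1+|x|}\bigr)\bigr]^2dx$ and integrates the cross term by parts, using $w(0)=0$, to arrive at the \emph{identity}
\[
\int_\R\frac{|w_x|^2}{(1+|x|)^2}\,dx \;=\; \int_\R\Bigl[\partial_x\Bigl(\tfrac{w}{1+|x|}\Bigr)\Bigr]^2dx \;+\; 2\int_\R\frac{|w|^2}{(1+|x|)^4}\,dx,
\]
from which the constant $\tfrac12$ is immediate and visibly sharp. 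Your route---fundamental theorem of calculus, Cauchy--Schwarz, Fubini, and the explicit kernel bound $\int_s^\infty x(1+x)^{-4}dx\le\tfrac12(1+s)^{-2}$---is more elementary and avoids spotting the right substitution, at the price of hiding why $\tfrac12$ is optimal. Both deliver exactly the constant needed, so the final bound (with $C=3$) is the same.
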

\begin{proof}  Let $w=u_k,~k=1,2$. We start with the following identity
\begin{equation}\label{eq0}
			\int_{\R}[(\frac{1}{1+|x|}w)_x]^2dx\!=\!\int_{\R}\frac{1}{(1+|x|)^2}|w_x|^2dx
\!+\!\int_{\R}\frac{-2x}{(1\!+\!|x|)^3|x|}w_xwdx\!+\!\int_{\R}\frac{1}{(1+|x|)^4}|w|^2dx.
\end{equation}
Let us firstly deal with the second term on the right-hand side.
\begin{equation}\label{eq13}
\begin{split}
\int_{\R}\frac{-2x}{(1+|x|)^3|x|}w_xwdx=\int_{-\infty}^{0}\frac{1}{(1+|x|)^3}\left(|w|^2\right)_xdx+\int_{0}^\infty\frac{-1}{(1+|x|)^3}\left(|w|^2\right)_xdx.
\end{split}
\end{equation}
Integrating by parts, as $|w(0)|^2=|u_k(0)|^2=0$, we get
\begin{equation*}
			\int_{\R}\frac{-2x}{(1+|x|)^3|x|}w_xwdx=-\int_{\R}\frac{3}{(1+|x|)^4}|w|^2dx.
\end{equation*}
Combining with \eqref{eq0}, one has
\begin{equation*}			\int_{\R}\frac{1}{(1+|x|)^2}|w_x|^2dx=\int_{\R}\left[\left(\frac{1}{1+|x|}w\right)_x\right]^2dx+\int_{\R}\frac{2}{(1+|x|)^4}|w|^2dx.
\end{equation*}
		By
		$$\int_{\R}\frac{1}{(1+|x|)^4}|w_x|^2dx\leq\int_{\R}\frac{1}{(1+|x|)^2}|w_x|^2dx,$$
		it yields that
		$$2	\int_{\R}\frac{1}{(1+|x|)^2}|w_x|^2dx-\frac{1}{2}\int_{\R}\frac{6}{(1+|x|)^4}|w|^2dx\gtrsim\int_{\R}\frac{1}{(1+|x|)^4}\left(|w|^2+|w_x|^2\right)dx,$$
which can conclude \eqref{eq12}.
\end{proof}

\section{INLS without potential}

In this section, we consider equation \eqref{eq1} for the case $\mu=0$, which reads
\begin{equation} \label{eq7}
\left\{
\begin{aligned}
			i\partial_t u(t,x) +\Delta  u(t,x) &=K(x)|u(t,x)|^{2\sigma}u(t,x),\\
			u(0,x)&=u_0(x)\in H^1(\mathbb{R}),
\end{aligned}\right.~~(t,x)\in\mathbb{R}\times\mathbb{R},
\end{equation}
We will complete the proof of Theorem \ref{theorem{1}} and \ref{T1}
 and give an application of Theorem \ref{theorem{1}} at the end of Sect. \ref{s3.1}.

\subsection{Proof of Theorem \ref{theorem{1}}}\label{s3.1}

	Following the ideas in \cite{CC2021}, we establish a localized Virial identity associated with \eqref{eq7} and incorporate the decaying factor in the nonlinearity to prove Theorem \ref{theorem{1}}. In what follows, we choose a smooth real-valued function $\phi~:~\R\rightarrow\R$ defined as
\begin{equation*}
\phi(x)=\left\{
\begin{aligned}
    & -R, &x<-R,\\
	& x, \qquad&|x|\leq\frac{R}{2},\\
    & R, &x>R,
\end{aligned}
     \right.
\end{equation*}
and require that in the region $\frac{R}{2}<|x|\leq R$, $\phi$ satisfies $\phi_x\geq0$ and $|\phi^{(\alpha)}(x)|\lesssim R|x|^{-\alpha}$ for $\alpha\geq1$.  Obviously, we have $\phi^{(\alpha)}(x)=0$ for $\alpha\geq1$ in the region $|x|>R$. Indeed, there are functions with the above properties, e.g. 
 \begin{equation*}
\phi(x):=\left\{
\begin{aligned}
    & -R, &x\leq -R,\\
    &-R+(R+x)\eta\left(\frac{2(R+x)}{R}\right), &-R<x<-\frac{R}{2}\\
	& x, \qquad&|x|\leq\frac{R}{2},\\
     &R-(R-x)\eta\left(\frac{2(R-x)}{R}\right), &\frac{R}{2}<x<R\\
    & R, &x\geq R,
\end{aligned}
     \right.
\end{equation*}
where
\begin{equation*}
    \eta(t):=\left\{
\begin{aligned}
    & 0 &t\leq 0,\\
	& \frac{e^{-\frac{1}{t}}}{e^{-\frac{1}{t}}+e^{-\frac{1}{1-t}}}, \qquad&0<t<1,\\
    & 1, &t\geq 1,
\end{aligned}
     \right.
\end{equation*}
and $\eta^{(\alpha)}$ are bounded depending on $\alpha$. By direct calculation, for $\alpha\geq 1$ and $\frac{R}{2}<|x|\leq R$ we have $$|\phi^{(\alpha)}(x)|\lesssim \frac{1}{R^{\alpha-1}}\left|\eta^{(\alpha-1)}\left(\frac{2(R-|x|)}{R}\right)\right|+\frac{(R-|x|)}{R^\alpha}\left|\eta^{(\alpha)}\left(\frac{2(R-|x|)}{R}\right)\right|\lesssim R|x|^{-\alpha}.$$
Assume that $u(t)\in H^1(\R)$ is a solution of equation \eqref{eq7}. Define
 \begin{equation}\label{eq8}
		I(u(t)):=\Im\int_{\R}\phi(x)u(t,x)\overline{u}_x(t,x)dx.
 \end{equation}
	
\begin{lemma}\label{lem3.1}
Let $u\in C(\R;H^1(\R))$ be a solution of equation \eqref{eq7}. Then $I(u(t))$ is well-defined and bounded in time. Moreover, the following Virial identity holds:
\begin{equation}\label{eq9}
  \begin{split}
				-\frac{d}{dt}I(u(t))=&2\int_{\R}\phi_x|u_x|^2dx-\frac{1}{2}\int_{\R}\phi_{xxx}|u|^2dx\\ &-\left(\frac{2}{2\sigma+2}-1\right)\int_{\R}\phi_x K(x)|u|^{2\sigma+2}dx-\frac{2}{2\sigma+2}\int_{\R}\phi K^{\prime}(x)|u|^{2\sigma+2}dx.
 \end{split}
 \end{equation}
  
\end{lemma}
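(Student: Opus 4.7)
The plan is to verify well-definedness and boundedness of $I(u(t))$, then derive the identity by time differentiation, substitution of the equation, and repeated integration by parts.

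For the preliminary part, since $\|\phi\|_{L^\infty(\R)}\leq R$, the Cauchy--Schwarz inequality yields $|I(u(t))|\leq R\,\|u(t)\|_{L^2}\|u_x(t)\|_{L^2}$. Under each of $(K_1)$--$(K_4)$, the global well-posedness of Proposition 1.1 together with conservation of mass and energy (using the Gagliardo--Nirenberg inequalities \eqref{eq6}--\eqref{eq6.1} to dominate the inhomogeneous part of the energy by the kinetic term and mass) yields a uniform bound $\|u(t)\|_{H^1(\R)}\leq M(u_0)$, whence $I(u(t))$ is bounded in $t$.

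For the identity, I differentiate under the integral sign and integrate by parts in the $\overline{u}_{xt}$ term; using the elementary identity $\Im(u_x\overline{u}_t)=-\Im(\overline{u}_x u_t)$, the two resulting pieces collapse into
\begin{equation*}
\frac{d}{dt}I(u(t))=2\,\Im\!\int_\R\phi\,u_t\overline{u}_x\,dx\;-\;\Im\!\int_\R\phi_x u\,\overline{u}_t\,dx.
\end{equation*}
Substituting $u_t=iu_{xx}-iK|u|^{2\sigma}u$ and its conjugate and extracting real parts via $\Im(iz)=\Re(z)$, the first integral, after using $\Re(u_{xx}\overline{u}_x)=\tfrac{1}{2}\partial_x|u_x|^2$, $\Re(u\overline{u}_x)=\tfrac{1}{2}\partial_x|u|^2$, and $|u|^{2\sigma}\partial_x|u|^2=\tfrac{1}{\sigma+1}\partial_x|u|^{2\sigma+2}$ followed by one integration by parts, reduces to $-\tfrac{1}{2}\int\phi_x|u_x|^2\,dx+\tfrac{1}{2(\sigma+1)}\int(\phi_x K+\phi K')|u|^{2\sigma+2}\,dx$. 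The second integral becomes $\Re\!\int\phi_x u\overline{u}_{xx}\,dx-\int\phi_x K|u|^{2\sigma+2}\,dx$, and two further integrations by parts convert the first of these into $\tfrac{1}{2}\int\phi_{xxx}|u|^2\,dx-\int\phi_x|u_x|^2\,dx$. Collecting terms, multiplying by $-1$, and using $\tfrac{1}{\sigma+1}=\tfrac{2}{2\sigma+2}$ yields \eqref{eq9}. All boundary contributions at $x=\pm\infty$ vanish because $\phi^{(\alpha)}$ is compactly supported in $[-R,R]$ for $\alpha\geq 1$, and because $u,u_x\in L^2(\R)$.

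The main (and routine) obstacle is justifying this formal calculation at $H^1$ regularity, where $u_{xx}$ and $u_t$ lie only in $H^{-1}$. I would handle it in the standard way: approximate $u_0$ by data $u_0^n\in H^2(\R)$, obtain classical solutions $u^n(t)$ on any finite time interval via the local theory, carry out the computation pointwise in $t$ for $u^n$, and then pass to the limit. Continuous dependence in $H^1$, together with the boundedness of $\phi,\phi_x,\phi_{xx},\phi_{xxx}$ (with the latter three compactly supported) and the fact that $\phi_x K$ and $\phi K'$ lie in $L^1(\R)$ under each of $(K_1)$--$(K_4)$ (since $|K|,|K'|\lesssim|x|^{-b}$ near the origin with $b<1$), makes every integral appearing in \eqref{eq9} a continuous functional on $H^1(\R)\hookrightarrow L^\infty(\R)$, so the identity passes to the limit.
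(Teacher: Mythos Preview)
Your proof is correct and follows essentially the same approach as the paper: differentiate $I(u(t))$, integrate by parts in the $\overline{u}_{xt}$ term, substitute the equation, and reduce via the identities $2\Re(u_{xx}\overline{u}_x)=\partial_x|u_x|^2$ and $2\Re(u\overline{u}_x)=\partial_x|u|^2$ together with further integrations by parts; the boundedness is obtained by the same H\"older estimate $|I(u(t))|\leq R\|u(t)\|_{L^2}\|u_x(t)\|_{L^2}$. The only notable addition on your side is the explicit $H^2$-approximation argument to justify the computation at $H^1$ regularity, which the paper leaves implicit.
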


\begin{proof}
Let $u(t)\in H^1(\R)$ be a solution of equation \eqref{eq7}. Integrating by parts, we get
		\begin{equation*}
			\begin{split}
				\frac{d}{dt}I(u(t))=&\Im\int_{\R}\phi u_t\overline{u}_xdx+\Im\int_{\R}\phi u\overline{u}_{xt}dx
				=\Im\int_{\R}\phi u_t\overline{u}_xdx-\Im\int_{\R}(\phi u)_x\overline{u}_tdx\\
				=&-\Im\int_{\R}i\phi (iu_t)\overline{u}_xdx-\Im\int_{\R}i(\phi u)_x\overline{iu_{t}}dx
				=-\Re\int_{\R}\phi u_x\overline{iu_t}dx-\Re\int_{\R}(\phi u)_x\overline{iu_t}dx\\
			=&-2\Re\int_{\R}\phi u_x\overline{iu_t}dx-\Re\int_{\R}\phi_x u\overline{iu_t}dx
				=-2\Re\int_{\R}\phi iu_t\overline{u_x}dx-\Re\int_{\R}\phi_x iu_t\overline{u}dx.
			\end{split}
		\end{equation*}
By \eqref{eq7},
		\begin{equation*}
			\begin{split}
				\frac{d}{dt}I(u(t))=&2\Re\int_{\R}\phi u_{xx}\overline{u_x}dx+\Re\int_{\R}\phi_x u_{xx}\overline{u}dx\\
				&-2\Re\int_{\R}\phi  K(x)|u|^{2\sigma}u\overline{u_x}dx-\Re\int_{\R} \phi_xK(x)|u|^{2\sigma}u\overline{u}dx.
			\end{split}
		\end{equation*}
Notice that $2\Re(u_x\overline{u})=2\Re(u\overline{u_x})=(|u|^2)_x$, so
		\begin{equation*}
			\begin{split}
				\frac{d}{dt}I(u(t))=&\int_{\R}\phi (|u_x|^2)_xdx+\Re\int_{\R}\phi_x u_{xx}\overline{u}dx\\
				&-\frac{2}{2\sigma+2}\Re\int_{\R}\phi K(x)(|u|^{2\sigma+2})_xdx-\int_{\R}\phi_xK(x)|u|^{2\sigma+2}dx.
			\end{split}
		\end{equation*}
Integrating by parts, we obtain
\begin{equation}\label{Iu}
			\begin{split}
				\frac{d}{dt}I(u(t))=&-2\int_{\R}\phi_x |u_x|^2dx-\Re\int_{\R}\phi_{xx} u_{x}\overline{u}dx\\
				&+\frac{2}{2\sigma+2}\int_{\R}(\phi K(x))_x |u|^{2\sigma+2}dx-\int_{\R}\phi_xK(x)|u|^{2\sigma+2}dx\\
				=&-2\int_{\R}\phi_x |u_x|^2dx-\frac{1}{2}\int_{\R}\phi_{xx} (|u|^2)_{x}dx\\
				&+\left(\frac{2}{2\sigma+2}-1\right)\int_{\R}\phi_x K(x)|u|^{2\sigma+2}dx+\frac{2}{2\sigma+2}\int_{\R}\phi K^{\prime}(x)|u|^{2\sigma+2}dx.\\
			\end{split}
\end{equation}
Integrating by parts again for the second term in \eqref{Iu}, we obtain \eqref{eq9}.

Applying H\"{o}lder's inequality and \eqref{eq8}, it yields
		\begin{equation}\label{eqbdd}
			|I(u(t))|\leq\|\phi\|_{L^\infty}\|u(t)\|_{L^2}\|u_x(t)\|_{L^2}\leq R\|u(t)\|_{L^2}\|u_x(t)\|_{L^2},
		\end{equation}
which shows the boundedness with respect to $t$.
\end{proof}
 Next, we will conclude the localized  $\dot{H}^1$-norm estimate from \eqref{eq6}.     \begin{lemma}\label{lem3.2-1}
Suppose $0<b<1$ and $u\in H^1(\R)$. For any $R>1$, we have
\begin{equation}\label{eq localGN}
    \int_{|x|\leq R} \phi_x|u_x|^2dx\geq \frac{C}{\|u\|^{4-2b}_{H^1}}\int_{|x|\leq \frac{R}{2}} |x|^{-b}|u|^{6-2b}dx-\frac{c}{R^2}\|u\|^2_{H^1}.
\end{equation}
\end{lemma}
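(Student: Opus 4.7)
The plan is to realize \eqref{eq localGN} as a truncated form of the Gagliardo-Nirenberg inequality \eqref{eq6}. The weight $\phi_x$ is ideally suited to this role because it equals $1$ on $|x|\leq R/2$ and vanishes for $|x|>R$, so $\int\phi_x|u_x|^2dx$ behaves like a localized version of $\|u_x\|_{L^2}^2$, which is precisely the quantity that \eqref{eq6} estimates from above.

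The first step is to construct a real-valued cutoff $\chi\in H^1(\R)$ with $0\leq \chi\leq 1$, $\chi\equiv 1$ on $|x|\leq R/2$, $\chi\equiv 0$ on $|x|\geq R$, and the two crucial bounds $|\chi'|\lesssim 1/R$ and $\chi^2\lesssim \phi_x$ everywhere. Using the explicit definition of $\phi$ in the paper, on the transition zone $R/2<|x|<R$ one computes $\phi_x = \eta(s)+s\eta'(s)$ with $s = 2(R-|x|)/R$ and $\eta,\eta'\geq 0$, so the choice $\chi(x):=\sqrt{\eta(s)}$ there yields $\chi^2=\eta(s)\leq \phi_x$ automatically; the flatness of $\eta$ near $0$ makes $\eta'/\sqrt{\eta}$ uniformly bounded, giving $|\chi'|\lesssim 1/R$, and the matching values $\eta(1)=1$, $\eta'(1)=0$ let $\chi$ glue onto the value $1$ at $|x|=R/2$.

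With $v := \chi u\in H^1(\R)$, the Gagliardo-Nirenberg inequality \eqref{eq6} applied to $v$ gives
\begin{equation*}
\int_\R|x|^{-b}|v|^{6-2b}\,dx \leq C_{GN}\|v_x\|_{L^2}^2 \|v\|_{L^2}^{4-2b}.
\end{equation*}
The left-hand side dominates $\int_{|x|\leq R/2}|x|^{-b}|u|^{6-2b}\,dx$ because $\chi\equiv 1$ there, while $\|v\|_{L^2}\leq \|u\|_{L^2}\leq \|u\|_{H^1}$. Expanding $v_x = \chi u_x+\chi' u$ and using the two cutoff bounds gives
\begin{equation*}
\|v_x\|_{L^2}^2 \leq 2\int\chi^2|u_x|^2\,dx+2\int(\chi')^2|u|^2\,dx \lesssim \int_{|x|\leq R}\phi_x|u_x|^2\,dx+\frac{1}{R^2}\|u\|_{L^2}^2.
\end{equation*}
Plugging these estimates back into Gagliardo-Nirenberg, dividing both sides by $\|u\|_{H^1}^{4-2b}$, and absorbing $\|u\|_{L^2}\leq \|u\|_{H^1}$ produces exactly \eqref{eq localGN}.

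The main obstacle is the cutoff construction: a generic smooth bump supported in $[-R,R]$ would leave a contribution $\int_{R/2<|x|\leq R}\chi^2|u_x|^2\,dx$ which is not absorbed by $\int\phi_x|u_x|^2\,dx$ unless the pointwise comparison $\chi^2\lesssim \phi_x$ is enforced on the entire transition annulus. Tying the definition of $\chi$ directly to the soft bump $\eta$ used to define $\phi$ is what makes the argument close, and it is the only step where the explicit form of $\phi$ is exploited beyond the values $\phi_x=1$ on $|x|\leq R/2$ and $\phi_x=0$ on $|x|>R$.
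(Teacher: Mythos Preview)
Your proof is correct and follows the same overall strategy as the paper: localize $u$ by a cutoff that equals $1$ on $|x|\le R/2$ and vanishes for $|x|>R$, apply the Gagliardo--Nirenberg inequality \eqref{eq6} to the localized function, and control the resulting derivative term by $\int\phi_x|u_x|^2+O(R^{-2})\|u\|_{L^2}^2$. The difference is in the choice of cutoff. You build an auxiliary $\chi=\sqrt{\eta(2(R-|x|)/R)}$ and enforce $\chi^2\le\phi_x$ pointwise, which requires checking that $\eta'/\sqrt{\eta}$ stays bounded near $0$. The paper instead uses $\phi_x$ itself as the cutoff: applying \eqref{eq6} to $\phi_x u$ and expanding via integration by parts gives the exact identity
\[
\int_\R |(\phi_x u)_x|^2\,dx=\int_{|x|\le R}(\phi_x)^2|u_x|^2\,dx-\int_{|x|\le R}\phi_x\phi_{xxx}|u|^2\,dx,
\]
in which the $\phi_{xx}^2$ cross-terms cancel. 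Since $|\phi_x|\lesssim 1$ and $|\phi_x\phi_{xxx}|\lesssim R^{-2}$, this yields the needed bound directly, without any separate construction or regularity check on a new cutoff. Both arguments are valid; the paper's is slightly shorter and avoids the detour through $\sqrt{\eta}$.
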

\begin{proof}
Since the supports of  
$\phi_x$ is within $\{x:|x|\leq R\}$, we have
\begin{equation*}
    \int_{|x|\leq\frac{R}{2}}|x|^{-b}|u|^{6-2b}dx\leq\int_\R|\phi_x|^{6-2b}|x|^{-b}|u|^{6-2b}dx=\int_\R|x|^{-b}|\phi_x u|^{6-2b}dx,
\end{equation*}
and
\begin{equation*}
    \int_\R |(\phi_x u)_x|^2 dx=\int_{|x|\leq R}(\phi_x)^2|u_x|^2dx-\int_{|x|\leq R}\phi_x\phi_{xxx}|u|^2dx.
\end{equation*}
The properties of smooth function $\phi$ imply that
\begin{equation*}
     \int_\R |(\phi_x u)_x|^2 dx\leq c\int_{|x|\leq R}\phi_x|u_x|^2dx+\frac{c}{R^2}\int|u|^2dx.
\end{equation*}
Combining this with \eqref{eq6}, we obtain
    \begin{equation*}
       \int_{|x|\leq\frac{R}{2}}|x|^{-b}|u|^{6-2b}dx\leq C_{GN}\|u\|^{4-2b}_{L^2}( c\int_{|x|\leq R}\phi_x|u_x|^2dx+\frac{c}{R^2}\|u\|^2_{H^1}),
    \end{equation*}
from which we can conclude \eqref{eq localGN}.
\end{proof}

Now let us turn to the Morawetz-type estimates.
\begin{prop}\label{lem3.5}
Let $u(t)\in H^1$ be a global solution of \eqref{eq1}. Under assumptions of Theorem \ref{theorem{1}}, the Morawetz-type estimates \eqref{eq3}, \eqref{eq1.5} and \eqref{eq1.2.3} are valid.
\end{prop}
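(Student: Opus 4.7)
The plan for all three estimates is to integrate the Virial identity \eqref{eq9} in time from $0$ to $T$ and to bound the resulting contributions. Two of them are common to every case: the boundary contribution $|I(u(0))-I(u(T))|\le 2\sup_t|I(u(t))|\lesssim R\|u\|_{L^\infty_tH^1_x}^2$ coming from \eqref{eqbdd} and the global $H^1$ boundedness provided by the hypotheses of Theorem~\ref{theorem{1}}, which yields a term of order $R$; and the dispersive term $\tfrac{1}{2}\int_0^T\!\int\phi_{xxx}|u|^2\,dxdt$, which is supported in the annulus $R/2<|x|\le R$ where $|\phi_{xxx}|\lesssim R|x|^{-3}$, producing a contribution of order $T/R^2$. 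The term $2\int\phi_x|u_x|^2\ge 0$ is either discarded (case~(1)) or processed further via Lemma~\ref{lem3.2-1} (cases~(2) and~(3)).

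For case~(1), the sign hypotheses of $(K_1)$ make the argument direct: since $\phi_x\ge 0$ and $K\ge 0$, the contribution $\tfrac{2\sigma}{2\sigma+2}\int\phi_x K|u|^{2\sigma+2}$ is nonnegative and can be dropped. For the remaining nonlinear term I would verify, via a piecewise check on $\{|x|\le R/2\}$, $\{R/2<|x|\le R\}$, and $\{|x|>R\}$ using $\mathrm{sgn}(\phi)=\mathrm{sgn}(x)$ together with $xK'(x)\le 0$, that $\phi(x)K'(x)\le 0$ everywhere and that $|\phi(x)K'(x)|\gtrsim|xK'(x)|/(1+|x|)$ uniformly in $R\ge 2$. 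Integrating $-\tfrac{d}{dt}I(u)\ge -\tfrac{1}{2}\int\phi_{xxx}|u|^2+c\int\tfrac{|xK'|}{1+|x|}|u|^{2\sigma+2}$ in $t$ and dividing by $T$ delivers \eqref{eq3}.

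For cases~(2) and~(3), signs are unavailable, so I would bound the combined nonlinear integrand $F(x):=\tfrac{2\sigma}{2\sigma+2}\phi_xK-\tfrac{2}{2\sigma+2}\phi K'$ in absolute value. Under $(K_2)$, support and size considerations on $\phi,\phi_x$ give $|F|\lesssim|x|^{-b}\mathbf{1}_{|x|\le R}+R|x|^{-b-1}\mathbf{1}_{|x|>R}$; under $(K_3)$ one obtains $|F|\lesssim|x|^{-b}(1+|x|)^{-2+b}\mathbf{1}_{|x|\le R}+R|x|^{-3}\mathbf{1}_{|x|>R}$. The tail contribution $\int_{|x|>R}|F||u|^{2\sigma+2}$ is estimated via 1D Sobolev embedding together with $\|u\|_{H^1}\lesssim\varepsilon$, yielding exactly the $1/R^b$ (resp.\ $1/R^2$) term on the right-hand side. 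For case~(2), I would apply Lemma~\ref{lem3.2-1} to the kinetic term and use the pointwise interpolation $|u|^{2\sigma+2}\le\|u\|_{L^\infty}^{2(\sigma-2+b)}|u|^{6-2b}$ (valid since $\sigma>2-b$) with $\|u\|_{L^\infty}\lesssim\varepsilon$ to convert $\int|x|^{-b}|u|^{6-2b}$ into $\int|x|^{-b}|u|^{2\sigma+2}$; since the compound coefficient $\varepsilon^{4-2b}\cdot\varepsilon^{2(\sigma-2+b)}=\varepsilon^{2\sigma}$ is small, the resulting cross term is absorbed into the left-hand side to produce \eqref{eq1.5}.

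The main obstacle is case~(3): for $\sigma\le 2-b$ the pointwise interpolation used in case~(2) runs in the wrong direction, so Lemma~\ref{lem3.2-1} does not feed directly into the desired weighted $|u|^{2\sigma+2}$-norm. My plan is to exploit the $L^1(\R)$-integrability of the weight $|x|^{-b}(1+|x|)^{-2+b}$ and its extra decay at infinity, splitting the spatial integral into $|x|\le 1$ (where the weight reduces to $|x|^{-b}$ and \eqref{eq6.1} yields $\int|x|^{-b}|u|^2\lesssim\|u\|_{H^1}^2$) and $|x|>1$ (where the weight behaves like $|x|^{-2}$ and excess powers of $|u|$ are absorbed via $\|u\|_{L^\infty}^{2\sigma}\lesssim\varepsilon^{2\sigma}$), and then closing the loop with Lemma~\ref{lem3.2-1} and small-$\varepsilon$ absorption. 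This weighted interpolation, rather than the Virial machinery itself, is the technical heart of the case~(3) part.
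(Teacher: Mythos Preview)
Your plan for cases~(1) and~(2) is correct and coincides with the paper's argument: the splitting into $|x|\le R/2$, $R/2<|x|\le R$, $|x|>R$, the sign observations under $(K_1)$, and the use of Lemma~\ref{lem3.2-1} together with the pointwise bound $|u|^{2\sigma+2}\le\|u\|_{L^\infty}^{2\sigma-4+2b}|u|^{6-2b}$ for $\sigma>2-b$ are exactly what the paper does.

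For case~(3) your strategy (pull out $\|u\|_{L^\infty}^{2\sigma}$, split at $|x|=1$, then absorb) is also the paper's, but the tool you name for the inner piece creates a gap. Inequality~\eqref{eq6.1} gives
\[
\int_{|x|\le 1}|x|^{-b}|u|^{2}\,dx\;\lesssim\;\|u_x\|_{L^2}^{2}+\|u\|_{L^2}^{2},
\]
and the $\|u\|_{L^2}^{2}$ contribution cannot be absorbed into the localized kinetic term $\int\phi_x|u_x|^{2}$ by the cutoff device of Lemma~\ref{lem3.2-1}; after multiplying by $\|u\|_{L^\infty}^{2\sigma}\lesssim\varepsilon^{2\sigma}$ and integrating in $t$ it leaves a fixed term of size $\varepsilon^{2\sigma+2}$ on the right-hand side, which destroys the form $R/T+R^{-2}$ of \eqref{eq1.2.3}. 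The paper avoids this by replacing \eqref{eq6.1} with estimates that involve only the derivative: a localized Poincar\'e-type bound
\[
\int_{|x|\le 1}|x|^{-b}|u|^{2}\,dx\;\le\;c\,\|u\|_{L^\infty(|x|\le 1)}^{2}\;\le\;c\,\|u_x\|_{L^2(|x|\le 2)}^{2},
\]
and, for the region $|x|>1$, a one-dimensional Hardy inequality $\int_{|x|>1}|x|^{-2}|u|^{2}\lesssim\int|u_x|^{2}$ (cited from \cite{FLW2022}). Both are then localized by the same cutoff computation as in Lemma~\ref{lem3.2-1}, yielding bounds by $\int\phi_x|u_x|^{2}+O(R^{-2})$ with no residual $L^2$ term, and the small-$\varepsilon$ absorption closes as you intended. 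So your outline is right, but to make it work you should swap \eqref{eq6.1} for Poincar\'e/Hardy and carry out the cutoff localization explicitly.
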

\begin{proof}
Using the properties of function $\phi$, we rewrite \eqref{eq9} as
  \begin{equation*}
      \begin{split}
				-\frac{d}{dt}I(u(t))=&2\int_{|x|\leq R}\phi_x|u_x|^2dx-\frac{1}{2}\int_{\frac{R}{2}<|x|\leq R}\phi_{xxx}|u|^2dx+\frac{2\sigma}{2\sigma+2}\int_{|x|\leq\frac{R}{2}}K(x)|u|^{2\sigma+2}dx\\ &-\frac{2}{2\sigma+2}\int_{|x|\leq\frac{R}{2}}xK^{\prime}(x)|u|^{2\sigma+2}dx+\frac{2\sigma}{2\sigma+2}\int_{\frac{R}{2}<|x|\leq R}\phi_x K(x)|u|^{2\sigma+2}dx\\
    &-\frac{2}{2\sigma+2}\int_{\frac{R}{2}<|x|\leq R}\phi K^{\prime}(x)|u|^{2\sigma+2}dx-\frac{2}{2\sigma+2}\int_{R<|x|}\phi K^{\prime}(x)|u|^{2\sigma+2}dx.
			\end{split}
  \end{equation*}

Firstly, if $K(x)$ satisfies $(K_1)$, all the terms containing $K(x)$ and $K^{\prime}(x)$ are positive, and combining with the second term controlled by $R^{-2}\|u\|^2_{H^1}$, we have
\begin{equation*}
\begin{split}
      \int_\R\frac{|xK^{\prime}(x)|}{1+|x|}|u|^{2\sigma+2}dx&\lesssim\int_{|x|
      \leq\frac{R}{2}}|xK^{\prime}(x)||u|^{2\sigma+2}dx+\int_{\frac{R}{2}<|x|}|\phi K^{\prime}(x)||u|^{2\sigma+2}dx\\
      &\lesssim-\frac{d}{dt}I(u(t))+\frac{1}{R^2},~~~~~~\text{for any}~R>1.
\end{split}
\end{equation*}
This implies \eqref{eq3}.

Next, if $K(x)$ satisfies $(K_2)$ and $2-b<\sigma<\infty$, we have
  \begin{equation*}
       \begin{split}
				-\frac{d}{dt}I(u(t))\geq&2\int_{|x|\leq R}\phi_x|u_x|^2dx-\frac{1}{2}\int_{\frac{R}{2}<|x|\leq R}\phi_{xxx}|u|^2dx-c\int_{|x|\leq\frac{R}{2}}|x|^{-b}|u|^{2\sigma+2}dx\\ &-c\int_{\frac{R}{2}<|x|\leq R}\phi_x |x|^{-b}|u|^{2\sigma+2}dx
    -c\int_{\frac{R}{2}<|x|\leq R}|\phi| |x|^{-b-1}|u|^{2\sigma+2}dx\\
    &-c\int_{R<|x|}R |x|^{-b-1}|u|^{2\sigma+2}dx,
			\end{split}
  \end{equation*}
where $c$ is independent of $R$. Notice that $|\phi^{(\alpha)}(x)|\lesssim R|x|^{-\alpha}$, so
  \begin{equation*}
      -\frac{d}{dt}I(u(t))\geq 2\int_{|x|\leq R}\phi_x|u_x|^2dx-c\int_{|x|\leq\frac{R}{2}}|x|^{-b}|u|^{2\sigma+2}dx-\frac{c}{R^2}\|u\|^2_{H^1}
      -\frac{c}{R^{b}}\|u\|^{2\sigma+2}_{H^1}.
  \end{equation*}
Applying H\"{o}lder's inequality and \eqref{eq localGN}, we get
  \begin{equation*}
      -\frac{d}{dt}I(u(t))\geq\left(\frac{2C}{\|u\|^{2\sigma-4+2b}_{H^1}}-c\right)\int_{|x|\leq\frac{R}{2}}|x|^{-b}|u|^{2\sigma+2}dx
      -\frac{c}{R^2}\|u\|^2_{H^1}-\frac{c}{R^{b}}\|u\|^{2\sigma+2}_{H^1}.
  \end{equation*}
According to Theorem 6.1.4 in \cite{Cz2003}, we know that,  for every $0<\delta<1$, there exists $\epsilon(\delta)>0$ such that, $\|u(0)\|_{H^1(\R)}\leq\epsilon(\delta)$ implies $\sup_{t\in\R}\|u\|_{H^1(\R)}<\delta$.
Thus, we can choose $\delta>0$ small enough so that $\frac{2C}{\delta^{2\sigma-4+2b}}>c+1$. This yields
  \begin{equation*}
      \int_{|x|\leq\frac{R}{2}}|x|^{-b}|u|^{2\sigma+2}dx\lesssim -\frac{d}{dt}I(u(t))+\frac{1}{R^2}+\frac{1}{R^b}.
  \end{equation*}
Integrating over $(0,T)$ and using \eqref{eqbdd}, we have
  \begin{equation*}
      \frac{1}{T}\int_{0}^{T}\int_{|x|\leq\frac{R}{2}}|x|^{-b}|u|^{2\sigma+2}dxdt\lesssim\frac{R}{T}+\frac{1}{R^b},
  \end{equation*}
for $R>1$.

Finally, if $K(x)$ satisfies $(K_3)$ and $0<\sigma\leq 2-b$, we have
\begin{equation*}
       \begin{split}
				-\frac{d}{dt}I(u(t))\geq&2\int_{|x|\leq R}\phi_x|u_x|^2dx-c\int_{|x|\leq\frac{R}{2}}|x|^{-b}(1+|x|)^{-2+b}|u|^{2\sigma+2}dx\\
    &-\frac{1}{2}\int_{\frac{R}{2}<|x|\leq R}\phi_{xxx}|u|^2dx-c\int_{\frac{R}{2}<|x|\leq R}\phi_x |x|^{-b}(1+|x|)^{-2+b}|u|^{2\sigma+2}dx\\
    &-c\int_{\frac{R}{2}<|x|\leq R}|\phi| |x|^{-b-1}(1+|x|)^{-2+b}|u|^{2\sigma+2}dx\\
    &-c\int_{R<|x|}R |x|^{-b-1}(1+|x|)^{-2+b}|u|^{2\sigma+2}dx.
			\end{split}
  \end{equation*}
Using $|\phi^{(\alpha)}(x)|\lesssim R|x|^{-\alpha}$ again, we get
      \begin{equation*}
          -\frac{d}{dt}I(u(t))\geq2\int_{|x|\leq R}\phi_x|u_x|^2dx-c\int_{|x|\leq\frac{R}{2}}|x|^{-b}(1+|x|)^{-2+b}|u|^{2\sigma+2}dx
          -\frac{c}{R^2}(\|u\|^2_{H^1}+\|u\|^{2\sigma+2}_{H^1}).
      \end{equation*}
Considering the second term, we have
  \begin{equation*}
      \int_{|x|\leq\frac{R}{2}}|x|^{-b}(1+|x|)^{-2+b}|u|^{2\sigma+2}dx
      \leq\|u\|^{2\sigma}_{L^\infty}\int_{|x|\leq\frac{R}{2}}|x|^{-b}(1+|x|)^{-2+b}|u|^2dx,
  \end{equation*}
and then, for large $R$, it yields
  \begin{equation}\label{eq3.3}
      \int_{|x|\leq\frac{R}{2}}|x|^{-b}(1+|x|)^{-2+b}|u|^{2\sigma+2}dx\lesssim\|u\|^{2\sigma}_{L^\infty}\int_{|x|\leq1}|x|^{-b}|u|^2dx
      +\|u\|^{2\sigma}_{L^\infty}\int_{1<|x|\leq\frac{R}{2}}(1+|x|)^{-2}|u|^2dx.
  \end{equation}
To deal with the first term on the right-hand side of \eqref{eq3.3}, applying Poincare's inequality and the argument of cutoff like Lemma \ref{lem3.2-1} (or by Exercises 8.5 in \cite{Brezis}), it gives
  \begin{equation*}
      \int_{|x|\leq1}|x|^{-b}|u|^2dx\leq c\|u\|^{2}_{L^\infty_{(|x|\leq1)}}\leq c\|u_x\|^2_{L^2_{(|x|\leq2)}}.
  \end{equation*}
For the second term on the right-hand side of \eqref{eq3.3}, we apply the following inequality for $u\in H^1(\R)$ (refer to Corollary 11 in \cite{FLW2022})
  \begin{equation*}
      \int_{1<|x|<\infty}|x|^{-2}|u|^2dx\leq c(\int_{1<|x|<\infty}|u_x|^2dx+\|u\|^{2}_{L^\infty_{(|x|\leq1)}})\leq c\int_{\R}|u_x|^2dx.
  \end{equation*}
By the same arguments as in Lemma \ref{lem3.2-1}, we have
  \begin{equation*}
      c\int_{1<|x|\leq\frac{R}{2}}(1+|x|)^{-2}|u|^2dx-\frac{c}{R^2}\|u\|^2_{H^1}\leq \int_{|x|\leq R}\phi_x|u_x|^2dx,
  \end{equation*}
and
  \begin{equation*}
      \int_{|x|\leq R} \phi_x|u_x|^2dx\geq \frac{c}{\|u\|^{2\sigma}_{H^1}}\int_{|x|\leq \frac{R}{2}} |x|^{-b}(1+|x|)^{-2+b}|u|^{2\sigma+2}dx-\frac{c}{R^2}\|u\|^2_{H^1}.
  \end{equation*}
Similarly, by selecting $\delta$ and $\epsilon(\delta)$ small enough,  we find that for any $T>0$ and sufficiently large $R>0$, the following holds
  \begin{equation*}
      \frac{1}{T}\int_{0}^{T}\int_{|x|\leq R}|x|^{-b}(1+|x|)^{-2+b}|u|^{2\sigma+2}dxdt\lesssim\frac{R}{T}+\frac{1}{R^2}.
      \end{equation*}
  \end{proof} 

Now it is ready to prove Theorem \ref{theorem{1}}.
	
\begin{proof}[Proof of Theorem \ref{theorem{1}}]
Firstly, we confirm that
there exists a sequence $t_n\rightarrow\infty$ such that $\|u(t_n)\|_{L^2(I)}\rightarrow0$.

If $K$ satisfies $(K_1)$, from the proof of Proposition \ref{lem3.5} and by applying Poincare inequality, we obtain
\begin{equation*}
    \int_{|x|\leq \frac{R}{2}}|u|^2 dx\leq R^2\int_{|x|\leq R^2}\phi_x |u_x|^2 dx\lesssim -R^2\frac{d}{dt}I(u)+\frac{R^2}{R^4}.
\end{equation*}
Integrating over time and taking $R=T^{\frac{1}{8}}$, it yields that for some $0<\alpha<1$,
\begin{equation*}
    \int^T_0\int_{|x|\leq \frac{R}{2}}|u(t)|^2 dx\lesssim T^\alpha,
\end{equation*}
which implies that there exists a sequence of times $t_n\rightarrow\infty$ such that, for any $R>0$,
\begin{equation*}
    \lim_{n\rightarrow\infty}\int_{|x|\leq R}|u(t_n)|^2 dx=0.
\end{equation*}
If $K$ satisfies $(K_2)$ or $(K_3)$, we have
\begin{equation*}
    \int_{|x|\leq R}\phi_x |u_x|^2 dx\lesssim -\frac{d}{dt}I(u)+\frac{1}{R^b},
\end{equation*}
or
\begin{equation*}
    \int_{|x|\leq R}\phi_x |u_x|^2 dx\lesssim -\frac{d}{dt}I(u)+\frac{1}{R^2},
\end{equation*}
respectively. Similarly, we choose $R=T^\theta$ such that, for
some $0<\alpha<1$,
\begin{equation*}
    \int^T_0\int_{|x|\leq \frac{R}{2}}|u(t)|^2 dx\lesssim T^\alpha,
\end{equation*}
which implies that there exists a sequence of times $t_n\rightarrow\infty$ such that for any $R>0$,
\begin{equation*}
    \lim_{n\rightarrow\infty}\int_{|x|\leq R}|u(t_n)|^2 dx=0,
\end{equation*}
which implies that, for any bounded interval $I\subset\R$,
\begin{equation}\label{eq18}
			\lim_{t_n\rightarrow\infty}\|u(t_n)\|_{L^2(I)}=0.
\end{equation}

Next, we will show that $\|u(t_n)\|_{L^\infty(I)}\rightarrow0$. We claim that for any interval  $I\subset\R$, there exists a function $\widetilde{x}(t_n)\in I$ such that
\begin{equation}\label{eq19}
			\lim_{n\rightarrow\infty} |u(t_n),\widetilde{x}(t_n)|^2=0.
\end{equation}

To prove \eqref{eq19} by contradiction, consider an interval $I$ and assume that for any positive integer $n$, there exists a subsequence $t_{n_k}>n$ and a constant $\varepsilon_0>0$ such that
\begin{equation}\label{eq19-1}
			|u(t_{n_k},x)|^2>\varepsilon_0,\qquad\forall x\in I.
\end{equation}
By integrating \eqref{eq19-1} over $I$, it gives
		$$\int_I|u(t_{n_k},x)|^2dx>|I|\varepsilon_0,$$
which contradicts \eqref{eq18}. By  H\"{o}lder's inequality, for any $x\in I$,
\begin{equation*}		\left||u(t_n,x)|^2-|u(t_n,\widetilde{x}(t_n))|^2\right|=\left|\int_{\widetilde{x}(t_n)}^{x}(|u|^2)_xdx\right|\leq2\int_I|u||u_x|dx\leq2\|u(t_n)\|_{L^2(I)}\|u_x(t_n)\|_{L^2(I)}.
\end{equation*}
By combining \eqref{eq18} and \eqref{eq19}, and letting $t_n\rightarrow\infty$, we get
		$$|u(t_n,x)|^2\rightarrow0,\qquad\forall x\in I,$$
which concludes \eqref{eq2}. This completes the proof of Theorem \ref{theorem{1}}. 
  \end{proof}

To show the application of the decay estimates \eqref{eq2},
we introduce the concept of pre-compactness used in many previous works (see \cite{DHR2008}).
Let $u(t)\in H^1$, we call $u(t)$ is pre-compact in $H^1$, if for any
time sequence $\{t_n\}\subset\R$, there exists a
function $\tilde{u}\in H^1$  depending on $\{t_n\}$ such that, up to a subsequence, $u(t_n)\rightarrow\tilde{u}$ in $H^1$. The pre-compactness implies uniform localization; we have

\begin{prop}\label{prop 5.1}
Let $u$ be a solution of \eqref{eq7} and $u(t)$  is pre-compact in $H^1(\R)$. Then for each $\varepsilon>0$, there exists a constant $R(\varepsilon)>0$ such that
    \begin{equation*}
        \int_{|x|>R(\varepsilon)} |u(t,x)|^2dx\leq\varepsilon, \qquad  \forall t\in\R.
    \end{equation*}
\end{prop}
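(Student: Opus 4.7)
The approach is a standard contradiction argument from concentration-compactness theory. Suppose the conclusion fails. Then there exists $\varepsilon_0 > 0$ such that for every positive integer $n$ I can pick a time $t_n \in \R$ with
\begin{equation*}
\int_{|x|>n}|u(t_n,x)|^2 \, dx > \varepsilon_0.
\end{equation*}
This yields a time sequence $\{t_n\}_{n \geq 1}$ on which the $L^2$-mass escapes past radius $n$.

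Next I invoke the pre-compactness hypothesis applied to this specific sequence: along a subsequence (still denoted $t_n$) there exists $\tilde{u} \in H^1(\R)$ with $u(t_n) \to \tilde{u}$ in $H^1(\R)$, and in particular in $L^2(\R)$. Since $\tilde{u} \in L^2(\R)$, by absolute continuity of the integral I can fix $N$ large enough that $\int_{|x|>N}|\tilde{u}(x)|^2 \, dx < \varepsilon_0/4$. For $n \geq N$, using the elementary estimate $|a|^2 \leq 2|a-b|^2 + 2|b|^2$ with $a = u(t_n,x)$ and $b = \tilde{u}(x)$, together with monotonicity of tails in $n$, I get
\begin{equation*}
\int_{|x|>n}|u(t_n,x)|^2 \, dx \leq 2\|u(t_n)-\tilde{u}\|_{L^2(\R)}^2 + 2\int_{|x|>N}|\tilde{u}(x)|^2 \, dx < 2\|u(t_n)-\tilde{u}\|_{L^2(\R)}^2 + \varepsilon_0/2.
\end{equation*}
The first term on the right tends to zero as $n\to\infty$ by $L^2$-convergence, so for $n$ sufficiently large the right-hand side is strictly less than $\varepsilon_0$, contradicting the construction. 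Hence the claim holds with some finite $R(\varepsilon)$ for every $\varepsilon>0$.

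Since no equation-specific input beyond the abstract pre-compactness hypothesis enters, the argument is routine; the only small point to verify is that the definition given in the paper (subsequential $H^1$-convergence for every time sequence) supplies exactly what is needed for the particular sequence $\{t_n\}$ built above. I do not anticipate any genuine obstacle; the main subtlety is merely bookkeeping the constants $\varepsilon_0/4$ versus $\varepsilon_0/2$ versus $\varepsilon_0$ so the final inequality is strict.
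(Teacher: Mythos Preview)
Your proof is correct and follows essentially the same contradiction-plus-pre-compactness route as the paper: negate the conclusion to produce a sequence with escaping mass, extract an $H^1$-convergent subsequence, and contradict the integrability of the limit $\tilde{u}$. Your write-up is in fact slightly more careful than the paper's in formulating the negation and handling the tail estimates.
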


\begin{proof}
Let us prove the proposition by contradiction.  Assume the conclusion is false,  then there exists a constant $A>0$ and a sequence of time $t_n$ such that
        \begin{equation*}
            \int_{|x|>R}|u(t_n,x)|^2dx\geq A
        \end{equation*}
for all $R>0$. Since $u(t)$ is pre-compact, there exists $\tilde{u}\in H^1$ such that $u(t_n)\rightarrow\tilde{u}$ in $H^1$, by passing to a subsequence of $t_n$, we can affirm that for sufficiently large $R>0$,
\begin{equation*}
            \int_{|x|>R}|\tilde{u}(x)|^2dx>\frac{A}{2},
\end{equation*}
which contradicts the fact that $\tilde{u}\in H^1$.
\end{proof}

\begin{thm}
   Under the assumptions of Theorem \ref{theorem{1}}, if $u(t)$ is the global solution of \eqref{eq7} with initial data $u_0\neq0$, then $u(t)$ is not pre-compact.
\end{thm}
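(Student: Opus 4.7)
The plan is to argue by contradiction, using mass conservation together with Theorem \ref{theorem{1}} and Proposition \ref{prop 5.1}. Suppose, contrary to the claim, that $u(t)$ is pre-compact in $H^1(\R)$. The first ingredient is mass conservation: since $K$ and $\mu V$ are real valued, a standard computation shows that $\|u(t)\|_{L^2(\R)} = \|u_0\|_{L^2(\R)}$ for every $t \in \R$, and by hypothesis $u_0 \neq 0$, so $m := \|u_0\|_{L^2(\R)}^2 > 0$.

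Next I would apply Proposition \ref{prop 5.1} with $\varepsilon = m/2$ to obtain a radius $R_0 := R(m/2)$ such that
\begin{equation*}
\int_{|x|>R_0}|u(t,x)|^2\,dx \leq \frac{m}{2}, \qquad \forall\, t\in\R.
\end{equation*}
Combining with mass conservation yields a uniform lower bound on the compact part,
\begin{equation*}
\int_{|x|\leq R_0}|u(t,x)|^2\,dx \geq m - \frac{m}{2} = \frac{m}{2}, \qquad \forall\, t\in\R.
\end{equation*}

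Finally, I would invoke Theorem \ref{theorem{1}} (any one of its three cases applies, since the present theorem assumes its hypotheses): there exists a sequence $t_n \to \infty$ such that $\|u(t_n)\|_{L^2(I)} \to 0$ for every bounded interval $I \subset \R$. Specializing to $I = [-R_0,R_0]$ gives
\begin{equation*}
\int_{|x|\leq R_0}|u(t_n,x)|^2\,dx \longrightarrow 0 \quad \text{as } n \to \infty,
\end{equation*}
contradicting the uniform lower bound $m/2$ established above. Hence $u(t)$ cannot be pre-compact in $H^1(\R)$. The argument is essentially a two-line comparison once Proposition \ref{prop 5.1}, mass conservation, and Theorem \ref{theorem{1}} are in hand, so there is no real obstacle; the only point that needs to be recorded carefully is that mass is indeed conserved under the flow of \eqref{eq7} for each of the nonlinearities $(K_1)$--$(K_3)$ considered, which follows directly from the global well-posedness in $H^1(\R)$ and multiplying the equation by $\bar u$ and taking imaginary parts.
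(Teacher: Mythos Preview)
Your proof is correct and follows essentially the same route as the paper: argue by contradiction, split the mass via Proposition~\ref{prop 5.1}, and use the subsequence decay from Theorem~\ref{theorem{1}} to force the compact part to zero, contradicting mass conservation. The only cosmetic difference is that you fix $\varepsilon = m/2$ at the outset, whereas the paper keeps $\varepsilon$ arbitrary and concludes $\|u_0\|_{L^2}^2\leq 2\varepsilon$ for every $\varepsilon>0$; also note that equation~\eqref{eq7} is the case $\mu=0$, so the mention of $\mu V$ is superfluous (though harmless).
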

\begin{proof} Let us prove the theorem by contradiction. If the conclusion is false, then there exists $u_0\neq0$ such that the corresponding solution $u(t)$ is pre-compact in $H^1(\R)$.  Applying the mass conservation of \eqref{eq7}, we have
    \begin{equation*}
        \int_\R |u_0|^2dx=\int_\R |u(t)|^2dx=\int_{|x|\leq R} |u(t)|^2dx+\int_{|x|> R} |u(t)|^2dx
    \end{equation*}
    for all $t\in\R$ and $R>0$. Combining Proposition \ref{prop 5.1} and Theorem \ref{theorem{1}},  taking $t=t_n$ as the sequence of times in \ref{theorem{1}}, we obtain
    \begin{equation*}
        \int_\R |u_0|^2dx\leq\lim_{n\rightarrow\infty}\int_{|x|\leq R(\varepsilon)} |u(t_n)|^2dx+\varepsilon\leq2\varepsilon.
    \end{equation*}
Due to the arbitrariness of $\varepsilon$ and mass conservation, this leads to a contradiction with the assumption that $u_0\neq0$.
    \end{proof}

Theorem 3.5 implies that, under the assumptions of Theorem \ref{theorem{1}}, if $u(t)$ is global solution of \eqref{eq7} and $u(t)$ is pre-compact, then $u(t)\equiv 0$.
Using the concentration-compactness method developed by Kenig-Merle \cite{KM2006}, for the INLS with $N\geq2$ and for the classical NLS, the pre-compactness of non-scattering solutions has been extensively discussed \cite{CFGM2020,CHL2020,Dinh2021-3,FG2017,FG2019,GM2021,KM2006,MMZ2021,HR2008,DHR2008,FXC2011}. To obtain a scattering solution, one can exclude the existence of non-scattering solutions under appropriate conditions by proving that such solutions are pre-compact.

\subsection{Proof of Theorem \ref{T1}}\label{s3.2}

Follows the ideas in \cite{M2020No}, it is achieved by choosing a weight function $\phi\in C^1(\R)\cap C^2(\R\setminus \{0\})$ and combining it with the oddness condition and the application of \eqref{eq13}. In this subsection, we start with a Virial identity associated with \eqref{eq7}, assume that $u(t)\in H^1_{odd}(\R)$ is a solution of equation \eqref{eq7} and set $\phi(x)=\frac{x}{1+|x|}$. Define
\begin{equation}\label{eq8.2}
		I(u(t)):=\Im\int_{\R}\phi(x)u(t,x)\overline{u}_x(t,x)dx,
\end{equation}

\begin{lemma}\label{lem3.2.1}
Let $u\in C(\R;H^1_{odd}(\R))$ be a solution of equation \eqref{eq7}. Then $I(u(t))$ is well-defined and bounded in time. Moreover, the following Virial identity holds:
\begin{equation}\label{eq9.2}
\begin{split}
				-\frac{d}{dt}I(u(t))=&2\int_{\R}\frac{1}{(1+|x|)^2}|u_x|^2dx-\frac{1}{2}\int_{\R}\frac{6}{(1+|x|)^4}|u|^2dx\\ &-\left(\frac{2}{2\sigma+2}-1\right)\int_{\R}\frac{K(x)}{(1+|x|)^2}|u|^{2\sigma+2}dx-\frac{2 }{2\sigma+2}\int_{\R}\frac{xK^{\prime}(x)}{(1+|x|)}|u|^{2\sigma+2}dx.
\end{split}
\end{equation}
\end{lemma}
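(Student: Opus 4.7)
\textbf{Proof plan for Lemma \ref{lem3.2.1}.} The plan is to mimic the derivation of \eqref{eq9} in Lemma \ref{lem3.1}, with the choice $\phi(x)=\frac{x}{1+|x|}$, and to exploit $u(t,0)=0$ exactly where the previous argument used $\phi\in C^{\infty}$. I first compute the relevant derivatives on $\R\setminus\{0\}$: $\phi_x(x)=\frac{1}{(1+|x|)^2}\in C(\R)$, while for $x>0$ one has $\phi_{xx}=-\frac{2}{(1+x)^3}$, $\phi_{xxx}=\frac{6}{(1+x)^4}$, and for $x<0$, $\phi_{xx}=\frac{2}{(1-x)^3}$, $\phi_{xxx}=\frac{6}{(1-x)^4}$, so $\phi_{xxx}(x)=\frac{6}{(1+|x|)^4}$ on $\R\setminus\{0\}$. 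Since $|\phi|\le 1$, the quantity $I(u(t))$ in \eqref{eq8.2} is well-defined and, by Cauchy--Schwarz, $|I(u(t))|\le\|u(t)\|_{L^2}\|u_x(t)\|_{L^2}$, which is uniformly bounded in $t$ by mass and energy conservation.

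Next, repeating verbatim the integration by parts chain that leads to \eqref{Iu} in the proof of Lemma \ref{lem3.1} (all those manipulations only use $\phi\in W^{1,\infty}$ and one instance of integration by parts for $u_{xx}\overline{u}$), I obtain
\begin{equation*}
\begin{split}
\frac{d}{dt}I(u(t))=&-2\int_{\R}\phi_x |u_x|^2\,dx-\frac{1}{2}\int_{\R}\phi_{xx}\,(|u|^2)_x\,dx\\
&+\left(\frac{2}{2\sigma+2}-1\right)\int_{\R}\phi_x K(x)|u|^{2\sigma+2}\,dx+\frac{2}{2\sigma+2}\int_{\R}\phi K'(x)|u|^{2\sigma+2}\,dx.
\end{split}
\end{equation*}
The only delicate step is the term $-\tfrac12\int_\R\phi_{xx}(|u|^2)_x\,dx$, because $\phi_{xx}$ has a jump discontinuity at $x=0$, with $\phi_{xx}(0^+)=-2$ and $\phi_{xx}(0^-)=2$; this is the main obstacle of the lemma.

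To handle it, I split the integral as in \eqref{eq13} and integrate by parts on each half-line separately:
\begin{equation*}
-\frac12\int_{-\infty}^{0}\phi_{xx}(|u|^2)_x\,dx-\frac12\int_{0}^{\infty}\phi_{xx}(|u|^2)_x\,dx=\frac12\int_{\R\setminus\{0\}}\phi_{xxx}|u|^2\,dx-\frac12\bigl[\phi_{xx}(0^-)-\phi_{xx}(0^+)\bigr]|u(t,0)|^2,
\end{equation*}
after noting that the boundary contributions at $\pm\infty$ vanish because $u\in H^1(\R)$. The jump boundary term at the origin is exactly where the odd hypothesis enters: since $u(t,\cdot)\in H^1_{odd}(\R)$, one has $u(t,0)=0$, so the bracket drops out. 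The surviving piece equals $\frac12\int_\R\frac{6}{(1+|x|)^4}|u|^2\,dx$.

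Finally, substituting $\phi_x=\frac{1}{(1+|x|)^2}$, $\phi_{xxx}=\frac{6}{(1+|x|)^4}$, and $\phi=\frac{x}{1+|x|}$ into the expression for $\frac{d}{dt}I(u(t))$ and multiplying by $-1$ yields \eqref{eq9.2}. The remaining technical check is to justify the integration by parts for $H^1$ (not $H^3$) solutions; this is standard and is done by regularization (either approximating $u_0$ in $H^1$ by $H^3$ data, applying the identity to the smooth solutions, and passing to the limit using the continuous dependence, or by a direct density argument combined with the Duhamel form of the equation), exactly as is tacitly done in the proof of Lemma \ref{lem3.1}.
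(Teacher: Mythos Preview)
Your proposal is correct and follows essentially the same approach as the paper: both reproduce the integration-by-parts chain of Lemma~\ref{lem3.1} up to \eqref{Iu}, then split $\int_\R \phi_{xx}(|u|^2)_x\,dx$ at the origin, integrate by parts on each half-line, and use $u(t,0)=0$ to kill the boundary term coming from the jump of $\phi_{xx}$. Your added remark on regularization to justify the identity at the $H^1$ level is a small point of extra rigor that the paper leaves implicit.
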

\begin{proof}
Let $u(t)\in H^1_{odd}(\R)$ be a solution of equation \eqref{eq7}. Similar as the proof of Lemma \ref{lem3.1}, we obtain
\begin{equation}\label{Iu2}
\begin{split}
				\frac{d}{dt}I(u(t))=&-2\int_{\R}\phi_x |u_x|^2dx-\Re\int_{\R}\phi_{xx} u_{x}\overline{u}dx\\
				&+\frac{2}{2\sigma+2}\int_{\R}(\phi K(x))_x |u|^{2\sigma+2}dx-\int_{\R}\phi_xK(x)|u|^{2\sigma+2}dx\\
				=&-2\int_{\R}\phi_x |u_x|^2dx-\frac{1}{2}\int_{\R}\phi_{xx} (|u|^2)_{x}dx\\
				&+\left(\frac{2}{2\sigma+2}-1\right)\int_{\R}\phi_x K(x)|u|^{2\sigma+2}dx+\frac{2}{2\sigma+2}\int_{\R}\phi K^{\prime}(x)|u|^{2\sigma+2}dx.\\
\end{split}
\end{equation}
Taking $\phi=\frac{x}{1+|x|}$, then $\phi_x=\frac{1}{(1+|x|)^2}$, and when $x\neq0,~\phi_{xx}=-\frac{2x}{(1+|x|)^3|x|}$, $\phi_{xxx}=\frac{6}{(1+|x|)^4}$. Considering the second term in \eqref{Iu2}, we have
\begin{equation*}
			\int_{\R}\phi_{xx} (|u|^2)_{x}dx=\int_{-\infty}^{0}\phi_{xx} (|u|^2)_{x}dx+\int_{0}^{+\infty}\phi_{xx} (|u|^2)_{x}dx.
\end{equation*}
Integrating by parts, it yields
\begin{equation*}
			\int_{\R}\phi_{xx} (|u|^2)_{x}dx=-\left(\int_{-\infty}^{0}	\frac{6}{(1+|x|)^4} |u|^2dx+\int_{0}^{+\infty}	\frac{6}{(1+|x|)^4} |u|^2dx\right)+4|u(t,0)|^2.
\end{equation*}
Combining with \eqref{Iu2}, we get
\begin{equation*}
\begin{split}
				\frac{d}{dt}I(u(t))=&-2\int_{\R}\frac{1}{(1+|x|)^2}|u_x|^2dx+\frac{1}{2}\int_{\R}\frac{6}{(1+|x|)^4}|u|^2dx\\ &+\left(\frac{2}{2\sigma+2}-1\right)\int_{\R}\frac{K(x)}{(1+|x|)^2}|u|^{2\sigma+2}dx+\frac{2 }{2\sigma+2}\int_{\R}\frac{xK^{\prime}(x)}{(1+|x|)}|u|^{2\sigma+2}dx.
\end{split}
\end{equation*}
Applying H\"{o}lder's inequality and \eqref{eq8.2},
\begin{equation}\label{eqbdd2}
			|I(u(t))|\leq\|\phi\|_{L^\infty}\|u(t)\|_{L^2}\|u_x(t)\|_{L^2}\leq\|u(t)\|_{L^2}\|u_x(t)\|_{L^2},
\end{equation}
which shows the boundedness with respect to $t$.
\end{proof}

In the next, we will give some estimates to $-\frac{d}{dt}I(u(t))$ in the context of the $H^1_\alpha$-norm for certain weighted function $\alpha(x)$.

\begin{lemma}\label{lem3.2.2}
If $u$ is an odd solution of \eqref{eq7} as stated in Theorem \ref{T1}, then
\begin{equation}\label{eq14}
			-\frac{d}{dt}I(u(t))\geq C\|u(t)\|^2_{H^1_\alpha(\R)},
\end{equation}
where $C>0,~\alpha(x)=\frac{1}{(1+|x|)^4}$.
\end{lemma}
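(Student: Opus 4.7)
The plan is to start from the identity \eqref{eq9.2} of Lemma \ref{lem3.2.1} and absorb the two nonlinear contributions into the quadratic part, using Lemma \ref{lem3.2} together with the assumed smallness $\|u_0\|_{H^1(\R)}\leq\epsilon$. The quadratic part of the right-hand side of \eqref{eq9.2} is, by the definition of $B$,
\[
2\int_{\R}\frac{|u_x|^2}{(1+|x|)^2}dx-\frac{1}{2}\int_{\R}\frac{6|u|^2}{(1+|x|)^4}dx=B(u_1)+B(u_2),
\]
so Lemma \ref{lem3.2} (whose proof already uses oddness through $u(0)=0$) yields $B(u_1)+B(u_2)\geq c_0\|u(t)\|^2_{H^1_\alpha(\R)}$ for some $c_0>0$. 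Everything then reduces to bounding the two nonlinear terms by a small multiple of $\|u\|^2_{H^1_\alpha(\R)}$.

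Under assumption $(K_4)$ one has
\[
\frac{|K(x)|}{(1+|x|)^2}\lesssim\frac{|x|^{-b}}{(1+|x|)^5}\leq\frac{|x|^{-b}}{(1+|x|)^4},\qquad \frac{|xK^{\prime}(x)|}{1+|x|}\lesssim\frac{|x|^{-b}}{(1+|x|)^4},
\]
so the nonlinear part of $-\frac{d}{dt}I(u)$ is controlled in absolute value by $C\int_\R\frac{|x|^{-b}|u|^{2\sigma+2}}{(1+|x|)^4}dx$. I would peel off $|u|^{2\sigma}$ via Sobolev embedding $\|u\|_{L^\infty(\R)}\lesssim\|u\|_{H^1(\R)}$, and invoke the persistence of smallness in $H^1$ (Theorem 6.1.4 of \cite{Cz2003}, exactly as in the proof of Proposition \ref{lem3.5}) to obtain $\sup_t\|u(t)\|^{2\sigma}_{H^1(\R)}\lesssim\epsilon^{2\sigma}$. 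The whole problem is then reduced to the weighted inequality
\[
\int_\R\frac{|x|^{-b}|u|^2}{(1+|x|)^4}dx\lesssim\|u\|^2_{H^1_\alpha(\R)}\qquad\text{for }u\in H^1_{odd}(\R).
\]

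The hard part is precisely this weighted estimate, which is where the singular weight $|x|^{-b}$ at the origin fights against the merely polynomial weight $\alpha(x)=(1+|x|)^{-4}$. I would split at $|x|=1$. On $\{|x|>1\}$ one has $|x|^{-b}\leq 1$, so that piece is trivially at most $\|u\|^2_{H^1_\alpha(\R)}$. On $\{|x|\leq 1\}$, $(1+|x|)^{-4}$ is bounded above and below, so one must bound $\int_{-1}^{1}|x|^{-b}|u|^2 dx$ by $\int_{-1}^{1}|u_x|^2 dx$. Oddness enters decisively here: since $u(0)=0$, Cauchy--Schwarz gives $|u(x)|^2\leq |x|\int_0^{|x|}|u_y|^2 dy$ for $x>0$, and Fubini together with $b<1$ yields
\[
\int_0^1 x^{-b}|u(x)|^2 dx\leq\int_0^1 |u_y(y)|^2\int_y^1 x^{1-b}dx\,dy\lesssim\int_0^1 |u_y|^2 dy,
\]
with the symmetric statement on $(-1,0)$. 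Since $\alpha\geq 1/16$ on $|x|\leq 1$, the right-hand side is $\lesssim\|u\|^2_{H^1_\alpha(\R)}$, completing the weighted estimate.

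Putting everything together, the nonlinear part of $-\frac{d}{dt}I(u)$ is bounded by $C\epsilon^{2\sigma}\|u\|^2_{H^1_\alpha(\R)}$; choosing $\epsilon$ small enough that $C\epsilon^{2\sigma}<c_0/2$ then gives
\[
-\frac{d}{dt}I(u(t))\geq c_0\|u\|^2_{H^1_\alpha(\R)}-C\epsilon^{2\sigma}\|u\|^2_{H^1_\alpha(\R)}\geq\frac{c_0}{2}\|u\|^2_{H^1_\alpha(\R)},
\]
which is \eqref{eq14}. The main obstacle is indeed the Hardy-type inequality above: without $u(0)=0$ the singular weight $|x|^{-b}$ near the origin could not be controlled by the slowly decaying $\alpha(x)$, so both the oddness of $u$ and the extra polynomial decay built into $(K_4)$ (compared with the coarser conditions $(K_2)$ and $(K_3)$) are essential to the argument.
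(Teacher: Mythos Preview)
Your argument for the $(K_4)$ case is correct, but you have omitted the $(K_1)$ case of Theorem~\ref{T1} entirely. Under $(K_1)$ no smallness is imposed, and none is needed: since $K>0$ and $xK'\leq0$, both nonlinear contributions in \eqref{eq9.2} already carry the favourable sign, so $-\frac{d}{dt}I(u(t))\geq B(u_1)+B(u_2)$ and Lemma~\ref{lem3.2} finishes at once. This is a one-line observation, but the lemma as stated covers both cases, and your opening reference to ``the assumed smallness $\|u_0\|_{H^1}\leq\epsilon$'' as a blanket hypothesis is therefore misleading.

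For the $(K_4)$ case your route to the key weighted inequality $\int_\R|x|^{-b}(1+|x|)^{-4}|u|^2\,dx\lesssim\|u\|^2_{H^1_\alpha}$ differs from the paper's. The paper applies the Gagliardo--Nirenberg inequality \eqref{eq6.1} with $u$ replaced by $u/(1+|x|)^2$, and then shows $\int_\R|(\,u/(1+|x|)^2)_x|^2\,dx\leq\int_\R(1+|x|)^{-4}|u_x|^2\,dx$ by the same integration-by-parts computation (using $u(0)=0$) as in Lemma~\ref{lem3.2}. Your approach---splitting at $|x|=1$ and invoking the pointwise Hardy bound $|u(x)|^2\leq|x|\int_0^{|x|}|u_y|^2\,dy$ from $u(0)=0$---is more elementary and fully self-contained, avoiding any appeal to \eqref{eq6.1}. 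Both arguments ultimately hinge on oddness through the vanishing at the origin; yours makes that dependence more transparent, while the paper's recycles machinery already in place.
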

\begin{proof}
By \eqref{eq9.2},
\begin{equation*}
\begin{split}
				-\frac{d}{dt}I(u(t))=&B(u_1)+B(u_2)\\
				&-\left(\frac{2}{2\sigma+2}-1\right)\int_{\R}\frac{K(x)}{(1+|x|)^2}|u|^{2\sigma+2}dx-\frac{2}
{2\sigma+2}\int_{\R}\frac{xK^{\prime}(x)}{1+|x|}|u|^{2\sigma+2}dx\\
				=&I_1+I_2+I_3,
\end{split}	
\end{equation*}
where
\begin{equation*}
\begin{split}
I_1&=B(u_1)+B(u_2),\\
I_2&=-\left(\frac{2}{2\sigma+2}-1\right)\int_{\R}\frac{K(x)}{(1+|x|)^2}|u|^{2\sigma+2}dx,\\
I_3&=-\frac{2 }{2\sigma+2}\int_{\R}\frac{xK^{\prime}(x)}{1+|x|}|u|^{2\sigma+2}dx.
\end{split}
\end{equation*}
To complete the proof, we need only to estimate the terms $I_2$ and $I_3$, as the term involving  $B(u_k)$ has already been discussed in Lemma \ref{lem3.2}.
		
If $K(x)$ satisfies $(K_1)$, then $\frac{2}{2\sigma+2}<1$ and $xK^{\prime}(x)<0$ imply that $I_2$, $I_3\geq0$, so we can get \eqref{eq14} from Lemma \ref{lem3.2}.
		
If $K(x)$ satisfies $(K_4)$, we only need to estimate $\int_{\R}\frac{1}{(1+|x|)^4}|x|^{-b}|u|^{2\sigma+2}dx$, since we have $|I_2|$, $|I_3|\lesssim\int_{\R}\frac{1}{(1+|x|)^4}|x|^{-b}|u|^{2\sigma+2}dx$.
Applying the inequality
\begin{equation*}			\int_{\R}\frac{1}{(1+|x|)^4}|x|^{-b}|u|^{2\sigma+2}dx\leq\|u\|_{L^\infty}^{2\sigma}\int_{\R}|x|^{-b}\left|\frac{1}{(1+|x|)^2}u\right|^{2}dx
\end{equation*}
and \eqref{eq6.1}, it gives
\begin{equation*}
			\int_{\R}\frac{1}{(1+|x|)^4}|x|^{-b}|u|^{2\sigma+2}dx\leq C\|u\|_{L^\infty}^{2\sigma}\left(\left\| (\frac{1}{(1+|x|)^2}u)_x\right\|_{L^2(\R)}^2+\left\|\frac{1}{(1+|x|)^2}u\right\|_{L^2(\R)}^2\right).
\end{equation*}
By the similar arguments as in Lemma \ref{lem3.2}, we get
		\begin{equation*}
			\int_{\R}\left[\left(\frac{1}{(1+|x|)^2}u\right)_x\right]^2\leq	\int_{\R}\frac{1}{(1+|x|)^4}|u_x|^{2}dx,
		\end{equation*}
and then
		\begin{equation}\label{eqM}
			\int_{\R}\frac{1}{(1+|x|)^4}|x|^{-b}|u|^{2\sigma+2}dx\leq C\|u\|_{H^1(\R)}^{2\sigma}\|u(t)\|^2_{H^1_\alpha(\R)}.
		\end{equation}
Similarly applying Theorem 6.1.4 of \cite{Cz2003} again,choosing $\delta>0$ small enough, there exists $\epsilon(\delta)>0$ such that $\|u(0)\|_{H^1(\R)}\leq\epsilon(\delta)$ implies that $\sup_{t\in\R}\|u\|_{H^1(\R)}<\delta$ to get \eqref{eq14}.
This completes the proof.
\end{proof}
\begin{cor} If $K(x)$ satisfies $(K_1)$, we have
		\begin{equation}\label{eq15}
			\int_{\R}\frac{|xK^{\prime}(x)|}{1+|x|}|u|^{2\sigma+2}dx\lesssim -\frac{d}{dt}I(u(t)).
		\end{equation}If $K(x)$ satisfies $(K_4)$ and $\|u_0\|_{H^1(\R)}\leq\epsilon$, where $\epsilon$ is small enough, then we have
		\begin{equation}\label{eq16}
			\int_{\R}\frac{1}{(1+|x|)^4}|x|^{-b}|u|^{2\sigma+2}dx\lesssim-\frac{d}{dt}I(u(t)).
		\end{equation}
\end{cor}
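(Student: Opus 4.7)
The plan is to extract both bounds directly from the decomposition
\[
-\frac{d}{dt}I(u(t)) \;=\; I_1 + I_2 + I_3
\]
that already appears in the proof of Lemma \ref{lem3.2.2}, where $I_1=B(u_1)+B(u_2)$, $I_2=-(\tfrac{2}{2\sigma+2}-1)\int_{\R}\tfrac{K(x)}{(1+|x|)^2}|u|^{2\sigma+2}dx$, and $I_3=-\tfrac{2}{2\sigma+2}\int_{\R}\tfrac{xK'(x)}{1+|x|}|u|^{2\sigma+2}dx$. The two inequalities \eqref{eq15} and \eqref{eq16} will follow from sign considerations in the $(K_1)$ case and a smallness-absorption argument in the $(K_4)$ case.

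For \eqref{eq15}, I would observe that under $(K_1)$ every term on the right-hand side is nonnegative. Indeed, Lemma \ref{lem3.2} gives $I_1 \geq C(\|u_1\|_{H^1_\alpha}^2+\|u_2\|_{H^1_\alpha}^2)\geq 0$. Since $\sigma>0$ one has $\tfrac{2}{2\sigma+2}-1<0$, and $K>0$ by $(K_1)$, so $I_2\geq 0$. Finally, $xK'(x)\leq 0$ by $(K_1)$ makes the integrand of $I_3$ equal to $\tfrac{2}{2\sigma+2}\tfrac{|xK'(x)|}{1+|x|}|u|^{2\sigma+2}$, so $I_3\geq 0$ and in fact $I_3 \simeq \int_{\R}\tfrac{|xK'(x)|}{1+|x|}|u|^{2\sigma+2}dx$. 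Dropping $I_1,I_2$ and keeping only $I_3$ then yields \eqref{eq15}.

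For \eqref{eq16}, the sign of $K$ is no longer fixed, so I cannot drop $I_2,I_3$; instead I absorb them into $I_1$. Under $(K_4)$ the bounds $|K(x)|\lesssim |x|^{-b}(1+|x|)^{-3}$ and $|K'(x)|\lesssim|x|^{-b-1}(1+|x|)^{-3}$ give
\[
|I_2|+|I_3| \;\lesssim\; \int_{\R}\frac{1}{(1+|x|)^4}|x|^{-b}|u|^{2\sigma+2}\,dx.
\]
The estimate \eqref{eqM} established in Lemma \ref{lem3.2.2} bounds the right-hand side by $C\|u\|_{H^1(\R)}^{2\sigma}\|u\|_{H^1_\alpha(\R)}^2$, while Lemma \ref{lem3.2} provides $I_1\geq C_0\|u\|_{H^1_\alpha(\R)}^2$ for some $C_0>0$. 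Combining,
\[
-\tfrac{d}{dt}I(u(t)) \;\geq\; \bigl(C_0 - C\|u\|_{H^1(\R)}^{2\sigma}\bigr)\|u\|_{H^1_\alpha(\R)}^2.
\]
By Theorem 6.1.4 in \cite{Cz2003}, shrinking $\epsilon$ in $\|u_0\|_{H^1}\leq\epsilon$ forces $\sup_{t}\|u(t)\|_{H^1}<\delta$ with $\delta$ so small that $C\delta^{2\sigma}\leq C_0/2$. Then $-\tfrac{d}{dt}I(u(t))\geq \tfrac{C_0}{2}\|u\|_{H^1_\alpha(\R)}^2$, and one more application of \eqref{eqM} converts this into \eqref{eq16}.

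The only delicate point is the absorption step for $(K_4)$: I have to make sure that the same smallness threshold $\epsilon$ chosen in Lemma \ref{lem3.2.2} suffices, so that the bound \eqref{eqM} and the lower bound on $I_1$ can be coupled without circularity. Since Lemma \ref{lem3.2.2} already selects $\delta$ precisely so that $C_0 - C\delta^{2\sigma} >0$, no new smallness assumption is required and the corollary is a direct reading of quantities computed inside that lemma's proof.
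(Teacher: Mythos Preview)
Your proposal is correct and follows essentially the same approach as the paper: in the $(K_1)$ case you use the nonnegativity of $I_1,I_2,I_3$ to isolate $I_3$, and in the $(K_4)$ case you combine \eqref{eqM} with the lower bound $-\tfrac{d}{dt}I(u(t))\gtrsim\|u\|_{H^1_\alpha}^2$ from Lemma~\ref{lem3.2.2}. The only difference is cosmetic---the paper cites \eqref{eq14} directly rather than re-deriving the absorption inequality, whereas you unpack that step explicitly.
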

\begin{proof}
If $K(x)$ satisfies $(K_1)$, then  by the proof of Lemma \ref{lem3.2}, one has $I_3\lesssim -\frac{d}{dt}I(u(t))$, which implies \eqref{eq15}.

If $K(x)$ satisfies $(K_4)$, as stated in the end of the proof of Lemma \ref{lem3.2.2}, from $\|u_0\|_{H^1(\R)}\leq\epsilon$, by \eqref{eqM} and \eqref{eq14}, we can get \eqref{eq16}.
\end{proof}

\begin{lemma}\label{lem3.5.2}
		There exists a constant $M>0$ such that
\begin{equation}\label{eq17}
			\int_{0}^{\infty} \|u(t)\|^2_{H^1_\alpha(\R)}dt\leq M.
\end{equation}
\end{lemma}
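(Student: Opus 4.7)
The plan is to obtain \eqref{eq17} by integrating the pointwise-in-time Virial-Morawetz inequality \eqref{eq14} in $t$ and using the uniform-in-time boundedness of $I(u(t))$ recorded in \eqref{eqbdd2}. Since the essential analytic work has already been absorbed into Lemma \ref{lem3.2.2}, the proof reduces to a straightforward bookkeeping argument.

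First I would integrate \eqref{eq14} over an arbitrary interval $[0,T]$ to obtain
\[
C\int_{0}^{T}\|u(t)\|^2_{H^1_\alpha(\R)}\,dt \;\leq\; -\int_{0}^{T}\frac{d}{dt}I(u(t))\,dt \;=\; I(u(0))-I(u(T)).
\]
Then the Cauchy–Schwarz-type bound \eqref{eqbdd2} yields
\[
|I(u(0))-I(u(T))| \;\leq\; 2\sup_{t\in\R}\|u(t)\|_{L^2(\R)}\|u_x(t)\|_{L^2(\R)} \;\leq\; 2\sup_{t\in\R}\|u(t)\|^2_{H^1(\R)},
\]
which is independent of $T$ provided that $u(t)$ stays uniformly bounded in $H^1(\R)$.

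The remaining point is therefore the verification that $\sup_{t\in\R}\|u(t)\|_{H^1(\R)}<\infty$. Under hypothesis $(K_1)$, positivity of $K$ renders \eqref{eq7} defocusing, and conservation of mass and energy supplies such a bound immediately. Under hypothesis $(K_4)$ together with the smallness condition $\|u_0\|_{H^1(\R)}\leq\epsilon$, the continuity argument from Theorem 6.1.4 of \cite{Cz2003}, already invoked inside Lemma \ref{lem3.2.2} to validate \eqref{eq14}, guarantees that $\sup_{t\in\R}\|u(t)\|_{H^1(\R)}<\delta$ for the chosen small $\delta>0$. In either case, sending $T\to\infty$ in the integrated inequality yields \eqref{eq17} with $M$ proportional to the uniform $H^1$ bound divided by the constant $C$ from Lemma \ref{lem3.2.2}.

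There is no substantial obstacle: the sign/size analysis of the nonlinear contributions and the coercivity of the quadratic form $B$ have all been handled upstream in Lemma \ref{lem3.2} and Lemma \ref{lem3.2.2}. The only care point is that the uniform $H^1$ control used above is the same control already assumed to hold in the setting of Theorem \ref{T1}, so no additional hypothesis is needed beyond those stated there.
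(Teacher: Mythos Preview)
Your proposal is correct and follows essentially the same route as the paper: integrate \eqref{eq14} over $[0,\tau]$, bound $I(u(0))-I(u(\tau))$ via \eqref{eqbdd2}, and let $\tau\to\infty$ using the uniform $H^1$ bound. The only difference is that you spell out explicitly why $\sup_{t}\|u(t)\|_{H^1}<\infty$ in each of the two cases, whereas the paper simply invokes it.
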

\begin{proof}
For any $\tau>0$, integrating \eqref{eq14} over $[0,~\tau]$, we get
		$$\int_0^\tau \|u(t)\|^2_{H^1_\alpha(\R)}dt\leq C(I(u(0))-I(u(\tau))).$$
It follows from \eqref{eqbdd2} that
$$\int_0^\tau \|u(t)\|^2_{H^1_\alpha(\R)}dt\leq C\sup_{t\in[0,~\tau]}\|u(t)\|^2_{H^1(\R)},$$
taking $\tau\rightarrow\infty$, we get the desired result.
\end{proof}
Now we can prove Theorem \ref{T1}.
\begin{proof}[Proof of Theorem \ref{T1}]
Firstly, we confirm that  $\|u\|_{L^2(I)}\rightarrow0$. Notice that
		\begin{equation*}
			\frac{d}{dt}\left(\frac{1}{2}\int_{\R}\psi|u(t)|^2\right)=\Re\int_{\R}\psi \overline{u}u_tdx=\Im\int_{\R}\psi \overline{u}(iu_t)dx.
		\end{equation*}
Let $\psi(x)=\frac{1}{(1+|x|)^4}$. Using \eqref{eq7} and integrating by parts,
		\begin{equation*}
			\begin{split}
				\frac{d}{dt}\left(\frac{1}{2}\int_{\R}\psi|u(t)|^2\right)&=-\Im\int_{\R}\psi\overline{u}u_{xx}+\Im\int_{\R}\psi K(x)|u|^{2\sigma}u\overline{u}dx\\
				&=\Im\left(\int_{\R}\psi|u_x|^2dx+\int_{\R}\psi_x\overline{u}u_xdx+\int_{\R}\psi K(x)|u|^{2\sigma+2}dx\right)\\
				&=\Im\int_{\R}\psi_x\overline{u}u_xdx.
			\end{split}
		\end{equation*}
As $|\psi_x(x)|=\frac{4}{(1+|x|)^5}\lesssim\frac{1}{(1+|x|)^4}$, applying H\"{o}lder's inequality, we have
\begin{equation}\label{psi}		\left|\frac{d}{dt}\left(\frac{1}{2}\int_{\R}\psi|u(t)|^2\right)\right|\lesssim\int_{\R}\frac{1}{(1+|x|)^4}\left(|u(t,x)|^2+|u_x(t,x)|^2\right)dx=\|u(t)\|^2_{H^1_\alpha(\R)},
		\end{equation}
where $\alpha(x)=\frac{1}{(1+|x|)^4}$. It follows from \eqref{eq17} that there exists a sequence $t_n\in\R$ such that $\|u(t_n)\|^2_{H^1_\alpha}\rightarrow0$, which implies that $\|u(t_n)\|^2_{L^2_\alpha}\rightarrow0$. For any $t\in\R$, integrating \eqref{psi} over $[t,t_n]$, and let $t_n\rightarrow\infty$, we get
\begin{equation*}
			\|u(t)\|^2_{L^2_\alpha(\R)}\lesssim\int_{t}^{\infty}\|u(s)\|^2_{H^1_\alpha(\R)}ds.
\end{equation*}
In consequence, $\lim_{t\rightarrow\infty}\int_{\R}\frac{1}{(1+|x|)^4}|u(t)|^2dx=0$, which implies that
		\begin{equation}\label{eq18.1}
			\lim_{t\rightarrow\infty}\|u(t)\|_{L^2(I)}=0
		\end{equation}
holds for any bounded interval $I\subset\R$.

Now let us show that $\|u\|_{L^\infty(I)}\rightarrow0$. We claim that for any interval  $I\subset\R$, there exists $\widetilde{x}(t)\in I$ such that
		\begin{equation}\label{eq19.1}
			\lim_{t\rightarrow\infty} |u(t,\widetilde{x}(t))|^2=0.
		\end{equation}
We prove \eqref{eq19.1} by contradiction. Let $I$ be an interval and suppose that for any positive integer $n$, there is a $t_n>n$ and an $\varepsilon_0>0$ such that
\begin{equation}\label{eq19-2}
			|u(t_n,x)|^2>\varepsilon_0,\qquad\forall x\in I.
\end{equation}
By integrating \eqref{eq19-2} over $I$, it gives
		$$\int_I|u(t_n,x)|^2dx>|I|\varepsilon_0,$$
which contradicts \eqref{eq18.1}. By  H\"{o}lder's inequality, for any $x\in I$,
\begin{equation*}	||u(t,x)|^2-|u(t,\widetilde{x}(t))|^2|=\left|\int_{\widetilde{x}(t)}^{x}(|u|^2)_xdx\right|\leq2\int_I|u||u_x|dx\leq2\|u(t)\|_{L^2(I)}\|u_x(t)\|_{L^2(I)}.
\end{equation*}
Combining \eqref{eq18.1} and \eqref{eq19.1}, let $t\rightarrow\infty$, we get
		$$|u(t,x)|^2\rightarrow0,\qquad\forall x\in I,$$
which implies \eqref{eq decay}.

Finally, integrating \eqref{eq15} and \eqref{eq16} over $[-t,t]$, respectively, it yields $$\int_{-t}^{t}\int_{\R}\frac{|xK^{\prime}(x)|}{1+|x|}|u|^{2\sigma+2}dxds\lesssim(|I(u(-t))|+|I(u(t))|)\lesssim\sup\|u\|^2_{H^1(\R)},$$
and
$$\int_{-t}^{t}\int_{\R}\frac{1}{(1+|x|)^4|x|^{b}}|u|^{2\sigma+2}dxds\lesssim(|I(u(-t))|+|I(u(t))|)\lesssim\sup\|u\|^2_{H^1(\R)}.$$
		Let $t\rightarrow\infty$, we get \eqref{eq Mod} and \eqref{eq Mof}. This completes the proof of Theorem \ref{T1}.
\end{proof}

\section{INLS with potential}

In this section, we consider the equation
	\begin{equation}\label{eq20}
		iu_t+u_{xx}= K(x)|u|^{2\sigma}u+\mu V(x)u,\qquad u\in H^1_{odd}(\R).
	\end{equation}
for the case $\mu\neq0$. Our aim is to prove Theorem \ref{theorem{2}}. 
Inspired by the ideas in \cite{M2020No},
we establish a Virial identity involving external potentials, and give several lemmas.
Set
	$$I(u(t))=\Im\int_\R \phi(x)u(t,x)\overline{u}_x(t,x)dx.$$

Firstly, let us prove $\eqref{eqv1}$ under condition $(1)$ in Theorem \ref{theorem{2}}. In this case $\phi(x)$ is taken the same as in Sect. \ref{s3.1}.
\begin{lemma}\label{lem 4.1}
Let $u\in C(\R;H^1_{odd}(\R))$ be a global solution of equation \eqref{eq20}, then
  \begin{equation}\label{eq21}
  \begin{split}
				-\frac{d}{dt}I(u(t))=&2\int_{\R}\phi_x|u_x|^2dx-\frac{1}{2}\int_{\R}\phi_{xxx}|u|^2dx-\mu\int_\R\phi V^{\prime}(x)|u|^2dx.\\ &-\left(\frac{2}{2\sigma+2}-1\right)\int_{\R}\phi_x K(x)|u|^{2\sigma+2}dx-\frac{2}{2\sigma+2}\int_{\R}\phi K^{\prime}(x)|u|^{2\sigma+2}dx.
 \end{split}
		\end{equation}

\end{lemma}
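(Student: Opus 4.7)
The plan is to follow the template of Lemma~\ref{lem3.1}, tracking only the extra contribution produced by the potential term $\mu V(x)u$ in \eqref{eq20}. First I would differentiate $I(u(t))$ in $t$ and perform the same chain of integration-by-parts manipulations as in the proof of Lemma~\ref{lem3.1} to reach
$$\frac{d}{dt}I(u(t)) = -2\Re\int_\R \phi\,(iu_t)\,\overline{u_x}\,dx - \Re\int_\R \phi_x\,(iu_t)\,\overline{u}\,dx.$$
Then I would substitute $iu_t = -u_{xx} + K(x)|u|^{2\sigma}u + \mu V(x)u$ from \eqref{eq20}. The pieces coming from $-u_{xx}$ and $K(x)|u|^{2\sigma}u$ reproduce, via the same integration by parts as in \eqref{Iu}, the four non-potential terms on the right-hand side of \eqref{eq21}, so everything reduces to computing the potential contribution.

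The potential piece to analyse is
$$-2\mu\Re\int_\R \phi V(x)\,u\,\overline{u_x}\,dx \;-\; \mu\int_\R \phi_x V(x)|u|^2\,dx.$$
Using $2\Re(u\,\overline{u_x})=(|u|^2)_x$ and integrating by parts in the first integral yields $\mu\int_\R (\phi V)_x|u|^2\,dx = \mu\int_\R \phi_x V|u|^2\,dx + \mu\int_\R \phi V'(x)|u|^2\,dx$. The two $\phi_x V|u|^2$ contributions then cancel, leaving precisely $\mu\int_\R \phi V'(x)|u|^2\,dx$ in $\frac{d}{dt}I(u(t))$, which produces the claimed $-\mu\int_\R \phi V'(x)|u|^2\,dx$ in \eqref{eq21}.

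The main care point is justifying the integration by parts when $V$ (and hence $V'$) is singular at the origin. I would split every integral at $x=0$: the boundary contributions at $0$ vanish because the oddness of $u$ gives $|u(t,0)|^2=0$ and, independently, $\phi(0)=0$; the contributions at $\pm\infty$ vanish because $\phi_x$ has compact support in $[-R,R]$ and $u(t,\cdot)\in H^1(\R)\hookrightarrow C_0(\R)$ kills $\phi V|u|^2$ at infinity under the decay provided by $(V)$. Hypothesis $(V)$, together with $|\phi(x)|\lesssim |x|$ near $0$, also supplies the absolute integrability of $\phi V'|u|^2$ needed to make every step rigorous. Boundedness of $I(u(t))$ in $t$ follows from the Cauchy--Schwarz bound exactly as in \eqref{eqbdd}, completing the bookkeeping.
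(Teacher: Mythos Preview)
Your proposal is correct and follows essentially the same approach as the paper's sketch: differentiate $I(u(t))$ as in Lemma~\ref{lem3.1}, substitute \eqref{eq20}, and reduce the potential contribution $-2\mu\Re\int_\R \phi V u\,\overline{u_x}\,dx-\mu\int_\R \phi_x V|u|^2\,dx$ to $\mu\int_\R \phi V'|u|^2\,dx$ by one integration by parts. You supply more careful justification of the integration by parts near $x=0$ and at infinity than the paper's sketch, which simply asserts the identity and refers back to Lemma~\ref{lem3.1}.
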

\begin{proof}[Sketch of proof]
Similar to the proof of Lemma \ref{lem3.1}, by \eqref{eq20} we have
		\begin{equation*}
			\begin{split}
				\frac{d}{dt}I(u(t))=&2\Re\int_{\R}\phi u_{xx}\overline{u_x}dx+\Re\int_{\R}\phi_x u_{xx}\overline{u}dx-2\mu\Re\int_\R\phi Vu\overline{u}_xdx-\mu\Re\int_\R\phi_xVu\overline{u}dx\\
				&-2\Re\int_{\R}\phi  K(x)|u|^{2\sigma}u\overline{u_x}dx-\Re\int_{\R}\phi_xK(x)|u|^{2\sigma}u\overline{u}dx.
			\end{split}
		\end{equation*}
We only need to estimate the terms involving the potential $V$. Using integrating by parts, we get
\begin{equation*}
			2\Re\int_\R\phi Vu\overline{u}_xdx+Re\int_\R\phi_xVu\overline{u}dx=-\int_\R\phi V^{\prime}(x)|u|^2dx.
\end{equation*}
We conclude \eqref{eq21} by the same argument as in Lemma \ref{lem3.1}.
	\end{proof}

Similar to the arguments in Sect. \ref{s3.1}, to deal with the term involving the potential, we divided $\mathbb{R}$ into $\{|x|\leq \frac{R}{2}\}$ and $\{|x|>\frac{R}{2}\}$. Firstly, we use the properties of function $\phi$ to get
\begin{equation*}
    \left|\int_\R \phi V^{\prime}(x)|u|^2dx\right|\leq\int_\R |x| |V^{\prime}(x)||u|^2dx=\int_{|x|\leq\frac{R}{2}} \phi^2_x |x V^{\prime}(x)||u|^2dx+\int_{|x|>\frac{R}{2}}|x V^{\prime}(x)||u|^2dx.
\end{equation*}
To estimate the second term, we have
\begin{equation*}
    \int_{|x|>\frac{R}{2}}|x V^{\prime}(x)||u|^2dx\lesssim\|u\|^2_{L^\infty}\int_{|x|>\frac{R}{2}}|x V^{\prime}(x)|dx\lesssim\sup_{|x|>R}\frac{1}{(1+|x|)^2}\int_{|x|>\frac{R}{2}}|x V^{\prime}(x)|(1+|x|)^2dx.
\end{equation*}
Combining with condition $(V)$, it yields
\begin{equation*}
     \int_{|x|>\frac{R}{2}}|x V^{\prime}(x)||u|^2dx\lesssim\frac{1}{R^2}.
\end{equation*}
The remainder is to estimate the term on $\{|x|\leq \frac{R}{2}\}$. We have
\begin{equation*}
    \int_{|x|\leq\frac{R}{2}} \phi^2_x |x V^{\prime}(x)||u|^2dx\leq\int_\R |x V^{\prime}(x)||\phi_xu|^2dx.
\end{equation*}
Let $V_0=-|xV^{\prime}(x)|$, combining Lemma \ref{lemma2.1} and condition $(V)$, we conclude that there exists a $\mu(V)>0$ such that $-\frac{d^2}{dx^2}+\mu V_0$ has a unique negative eigenvalue for all $\mu<\mu(V)$. Furthermore, the corresponding eigenfunction is even, that is
\begin{equation*}
    \mu\int_\R V_0|u|^2dx+\int_\R|u_x|^2dx\geq0, ~~~~\forall u\in H^1_{odd}(\R).
\end{equation*}
By taking $u=\phi_xu$, we have
\begin{equation*}
    \mu\int_{|x|\leq\frac{R}{2}} \phi^2_x |x V^{\prime}(x)||u|^2dx\leq \int_\R|(\phi_x u)_x|^2dx\lesssim \int_\R\phi_x|u_x|^2dx+\frac{1}{R^2}.
\end{equation*}
Hence, there exits a constant $\mu_0>0$ such that for all $0<\mu<\mu_0$,
\begin{equation*}
    \begin{split}
-\frac{d}{dt}I(u(t))\geq&\frac{3}{2}\int_{\R}\phi_x|u_x|^2dx-\frac{1}{2}\int_{\R}\phi_{xxx}|u|^2dx-\frac{1}{R^2}\\ &-\left(\frac{2}{2\sigma+2}-1\right)\int_{\R}\phi_x K(x)|u|^{2\sigma+2}dx-\frac{2}{2\sigma+2}\int_{\R}\phi K^{\prime}(x)|u|^{2\sigma+2}dx.
			\end{split}
\end{equation*}

Parallel to Sect. \ref{s3.1}, we have
\begin{lemma}\label{lem4.2}
Under the assumptions of Theorem \ref{theorem{2}}, if $u\in C(\R;H^1_{odd}(\R))$ is a global solution
of \eqref{eq20}, then
  		\begin{equation*}
			-\frac{d}{dt}I(u(t))+\frac{1}{R^b}\gtrsim \int_{|x|<R}\phi_x |u_x|^2dx.
		\end{equation*}
\end{lemma}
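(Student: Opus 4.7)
The plan is to start from the Virial identity \eqref{eq21} of Lemma \ref{lem 4.1} and to take as given the preliminary analysis of the potential term already performed in the paragraph just above the lemma statement. That analysis used condition $(V)$ together with Lemma \ref{lemma2.1} (Klaus--Simon) applied to $V_0=-|xV'(x)|$, tested the resulting spectral inequality against $\phi_x u$, and absorbed $-\mu\int\phi V'(x)|u|^2\,dx$ at the cost of half of $\int\phi_x|u_x|^2\,dx$ and an $O(R^{-2})$ error. Taking the displayed inequality just above Lemma \ref{lem4.2} as the starting point, what remains is to show that the $\phi_{xxx}|u|^2$ term together with the nonlinear pieces
\begin{equation*}
\left(\tfrac{2}{2\sigma+2}-1\right)\!\int_\R \phi_x K(x)|u|^{2\sigma+2}dx+\tfrac{2}{2\sigma+2}\int_\R \phi K'(x)|u|^{2\sigma+2}dx
\end{equation*}
are dominated by $\tfrac12\int_{|x|\le R}\phi_x|u_x|^2\,dx+cR^{-b}$.

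I would then mimic the region decomposition from the proof of Proposition \ref{lem3.5}: split each integral into the central region $\{|x|\le R/2\}$ where $\phi_x\equiv 1$, the transition annulus $\{R/2<|x|\le R\}$, and the tail $\{|x|>R\}$ on which $\phi_x=\phi_{xx}=\phi_{xxx}=0$ and $|\phi|\le R$. Using $|\phi^{(\alpha)}(x)|\lesssim R|x|^{-\alpha}$ on the annulus together with the decay of $K$ and $K'$ postulated in $(K_2)$ (resp.\ $(K_3)$), the contributions from the outer two regions are bounded by $cR^{-b}\|u\|_{H^1}^{2\sigma+2}+cR^{-2}\|u\|_{H^1}^2$, while the $\phi_{xxx}|u|^2$ term contributes only $O(R^{-2}\|u\|_{H^1}^2)$; these are exactly the computations already carried out in Proposition \ref{lem3.5}.

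In the central region, the weight on $|u|^{2\sigma+2}$ is at most $|x|^{-b}$ under $(K_2)$ and at most $|x|^{-b}(1+|x|)^{-2+b}$ under $(K_3)$. For $(K_2)$ with $2-b<\sigma<\infty$ I would pull out the factor $\|u\|_{L^\infty}^{2\sigma-4+2b}\lesssim \|u\|_{H^1}^{2\sigma-4+2b}$ (valid precisely because $2\sigma-4+2b>0$) and invoke the localized Gagliardo--Nirenberg bound \eqref{eq localGN} to convert $\int_{|x|\le R/2}|x|^{-b}|u|^{6-2b}\,dx$ into $\int_{|x|\le R}\phi_x|u_x|^2\,dx$ plus an $O(R^{-2})$ error. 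For $(K_3)$ with $0<\sigma\le 2-b$, the same role is played by the splitting \eqref{eq3.3} into $\{|x|\le 1\}$ (Poincar\'e) and $\{|x|>1\}$ (the Hardy inequality Corollary 11 of \cite{FLW2022}). Either way, the central contribution is controlled by
\begin{equation*}
c\|u\|_{H^1}^{2\sigma}\Bigl(\int_{|x|\le R}\phi_x|u_x|^2\,dx+R^{-2}\Bigr),
\end{equation*}
and by Theorem 6.1.4 of \cite{Cz2003} the smallness of $\|u_0\|_{H^1}$ allows one to choose $\delta$ so small that $c\delta^{2\sigma}\le\tfrac14$, which absorbs this piece into the $\tfrac32\int_{|x|\le R}\phi_x|u_x|^2\,dx$ term.

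Assembling the pieces and noting $R^{-2}\le R^{-b}$ for $R\ge 1$ (since $0<b<1$) gives the claim. The principal obstacle will be the bookkeeping in case $(K_3)$: unlike $(K_2)$, where the single inequality \eqref{eq localGN} does the whole job, case $(K_3)$ forces us to split the central region at $|x|=1$ and to combine two different inequalities, after which one must replace the resulting $\|u_x\|_{L^2(\R)}^2$ by $\int_{|x|\le R}\phi_x|u_x|^2\,dx$ modulo an $O(R^{-2})$ error via a cutoff argument analogous to the one that proved Lemma \ref{lem3.2-1}. Once that replacement is in place, the absorption step goes through uniformly in both subcases.
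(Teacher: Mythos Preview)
Your outline is correct and follows exactly the route the paper intends: you take the displayed inequality obtained just above the lemma (which already absorbs the potential term), and then reproduce the region decomposition and absorption arguments of Proposition~\ref{lem3.5} under $(K_2)$ and $(K_3)$. All of the analytic steps you describe are accurate.

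However, you miss the one point the paper actually isolates as new. The paper's proof of Lemma~\ref{lem4.2} explicitly says that ``we need only show that $V$ satisfies the assumptions of Theorem~6.1.4 in \cite{Cz2003}'' and then spends the entire proof verifying that $V\in L^1+L^\infty$ from hypothesis $(V)$: boundedness of $V$ on $[1,\infty)$ via $\int_1^\infty |sV'(s)|\,ds<\infty$, and integrability on $(0,1)$ via an integration-by-parts argument together with $\limsup_{x\to 0^+} x|V(x)|<\infty$. Only after this does one know that $g(u)=\mu Vu+K|u|^{2\sigma}u$ fits the framework of Theorem~6.1.4, so that the small-data bound $\sup_t\|u(t)\|_{H^1}<\delta$ (which you invoke in your absorption step) is actually available. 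In your proposal you take this applicability for granted; in the paper it is the whole content of the lemma's proof, since the remaining estimates are declared to have been ``estimated previously.'' Add this verification and your argument is complete.
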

\begin{proof} [Sketch of proof]
Since we have estimated all terms in \eqref{eq21} previously, here we need only show that $V$ satisfies the assumptions  of Theorem 6.1.4  in \cite{Cz2003}. We confirm that $V\in L^1+L^{\infty}$. Indeed, fixing an $x_0\in[1,\infty)$, then for any $x\in[1,\infty)$, by the hypothesis $(V)$,
		$$|V(x_0)-V(x)|\leq\left|\int_{x_0}^{x}V^{'}(s)ds\right|\leq\int_{1}^\infty |sV^{'}(s)|ds<\infty,$$
thus we have  $V|_{[1,\infty)}\in L^\infty[1,\infty)$. On the other hand, $$\int_{0}^{1}|V|dx\leq\limsup_{\rho\to 0^+}\left(x|V||^1_\rho-\int_{\rho}^{1}x|V|^{'}dx\right)\leq \limsup_{\rho\to 0^+}\left(|V(1)|+\rho|V(\rho)|+\int_{\rho}^{1}x|V^{'}|dx\right).$$We claim that $\limsup_{x\to 0^+}(x|V|)<\infty$. Indeed, set $$V^1(x)=\int^\frac{1}{2}_x|V^{'}(s)|ds+|V(\frac{1}{2})|$$ for $ 0<x<\frac{1}{2}$, then  $V^1(x)$ is a monotonic decreasing function and $$\liminf_{x\to 0^+}\frac{V^1(x)}{|V(x)|}\geq 1.$$ Since $\int_{0}^{1}x|V^{\prime}(x)|dx<\infty$, we deduce that $\limsup_{x\to 0^+} xV^1(x)<\infty$, which implies  $\limsup_{x\to 0^+}(x|V|)<\infty$. Therefore,  $V|_{(0,1)}\in L^1(0,1)$, and we get $V\in L^1+L^\infty$. Finally, taking $\mu$ small enough, we can verify that $g(u)=\mu Vu+ K|u|^{2\sigma}u$ satisfies the the assumptions in Theorem 6.1.4 in \cite{Cz2003}, and the conclusion can be obtained.
\end{proof}
	
\begin{proof}[Proof of \eqref{eqv1}]
With Lemma \ref{lem4.2} established, using similar reasoning as in Sect. \ref{s3.1}, we can derive
 \eqref{eqv1}.
 
\end{proof}

Finally, we provide the proof of \eqref{eqv2} under condition $(2)$ of Theorem \ref{theorem{2}}.
In this case, we take $\phi(x)=\frac{x}{1+|x|}$.
\begin{cor}
Let $u\in C(\R;H^1_{odd}(\R))$ be a global solution of equation \eqref{eq20}, then
\begin{equation}\label{eq21.1}
\begin{split} -\frac{d}{dt}I(t)=&2\int_{\R}\frac{1}{(1+|x|)^2}|u_x|^2dx-\frac{1}{2}\int_{\R}\frac{6}{(1+|x|)^4}|u|^2dx-\mu\int_{\R}\frac{xV^{\prime}(x)}{1+|x|}|u|^2dx\\ &-(\frac{2}{2\sigma+2}-1)\int_{\R}\frac{K(x)}{(1+|x|)^2}|u|^{2\sigma+2}dx-\frac{2 }{2\sigma+2}\int_{\R}\frac{xK^{\prime}(x)}{(1+|x|)}|u|^{2\sigma+2}dx	.	
\end{split}	
\end{equation}
\end{cor}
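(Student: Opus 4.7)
My plan is to derive \eqref{eq21.1} by combining the general Virial identity from Lemma \ref{lem 4.1} with the weight-specific manipulation already carried out for $\phi(x)=x/(1+|x|)$ in the proof of Lemma \ref{lem3.2.1}. No new conceptual ingredient is required; the content of the corollary is just the instantiation of Lemma \ref{lem 4.1} at the non-smooth weight $\phi(x)=x/(1+|x|)$, with the oddness of $u$ absorbing the resulting boundary contribution at the origin.

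First, I will retrace the argument used in Lemma \ref{lem 4.1} up to the intermediate identity
\begin{equation*}
\frac{d}{dt}I(u(t)) = -2\int_\R \phi_x |u_x|^2\, dx - \tfrac{1}{2}\int_\R \phi_{xx}(|u|^2)_x\, dx - \int_\R \phi V'(x) |u|^2\, dx + (\text{nonlinear terms}),
\end{equation*}
where the potential contribution has been isolated via the integration by parts $2\Re\int \phi V u\bar u_x + \Re\int \phi_x V u\bar u = -\int \phi V'(x)|u|^2\, dx$ (both $\phi$ and $\phi_x$ are continuous at $0$, so no boundary term appears here), and the nonlinear terms are exactly those appearing in \eqref{eq21}. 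Substituting $\phi_x=1/(1+|x|)^2$ and $\phi(x)=x/(1+|x|)$ already produces the $|u_x|^2$ term, the potential term $-\mu\int xV'(x)(1+|x|)^{-1}|u|^2 dx$, and the two nonlinear terms of \eqref{eq21.1}.

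Second, I need to convert $-\tfrac{1}{2}\int \phi_{xx}(|u|^2)_x\, dx$ into the $\phi_{xxx}|u|^2$ form. Since $\phi_{xx}(x)=-2x/((1+|x|)^3|x|)$ is discontinuous at the origin, I cannot integrate by parts across $x=0$. Instead, as in the proof of Lemma \ref{lem3.2.1}, I split the integral over $(-\infty,0)$ and $(0,\infty)$, integrate by parts on each half-line, and collect the jump at $x=0$ to obtain
\begin{equation*}
\int_\R \phi_{xx}(|u|^2)_x\, dx = -\int_\R \frac{6}{(1+|x|)^4}|u|^2\, dx + 4|u(t,0)|^2.
\end{equation*}
The boundary term $4|u(t,0)|^2$ vanishes because $u(t,\cdot)\in H^1_{odd}(\R)$ forces $u(t,0)=0$ by the continuous embedding $H^1(\R)\hookrightarrow C(\R)$. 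Inserting this into the intermediate identity and flipping the overall sign yields precisely \eqref{eq21.1}.

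The only genuine subtlety is the handling of the origin; everything else is bookkeeping. The potential-induced contribution requires no special care because the integration by parts that extracts $V'$ uses only $\phi$ and $\phi_x$, both of which are continuous at $x=0$, so the potential term passes through unchanged from the smooth-weight version in Lemma \ref{lem 4.1}. Consequently, the proof is a direct synthesis of the two preceding lemmas, with the odd-symmetry of $u$ playing the same role it played in Section \ref{s3.2}.
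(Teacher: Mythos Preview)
Your proposal is correct and takes essentially the same approach as the paper, which simply says the proof is similar to that of Lemma \ref{lem 4.1} and Lemma \ref{lem3.2.1} and omits the details. You have filled in exactly those details: retrace the derivation of Lemma \ref{lem 4.1} only as far as the intermediate form containing $\phi_{xx}(|u|^2)_x$, then specialize to $\phi(x)=x/(1+|x|)$ and handle the discontinuity of $\phi_{xx}$ at the origin by the half-line integration by parts of Lemma \ref{lem3.2.1}, using $u(t,0)=0$ from oddness; your observation that the potential term only involves the continuous functions $\phi$ and $\phi_x$ and hence produces no boundary contribution is the one point not made explicit in the paper.
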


The proof is similar to that of Lemma \ref{lem 4.1} and Lemma \ref{lem3.2.1}, we omit the details here.

Let $w\in H^1(\R)$ be a real-valued function.  To deal with the term involving the potentials, we introduce a quadratic form as $$B_V(w)=-\frac{1}{2}\int_{\R}\frac{6}{(1+|x|)^4}|w|^2dx-\mu\int_{\R}\frac{xV^{\prime}(x)}{1+|x|}|w|^2dx+2\int_{\R}\frac{1}{(1+|x|)^2}|w_x|^2dx.$$
	
We claim that  for $\alpha(x)=\frac{1}{(1+|x|)^4}$, the following inequality holds for the odd solution $u$:
	\begin{equation*}
		\|u\|^2_{H^1_\alpha(\R)}\lesssim B_V(u_1)+B_V(u_2).
	\end{equation*}
Indeed, let $w=u_k\in H^1_{odd}(\R),~k=1,~2$ and write $B_V(w)=H(w)+J(w)$, where
	\begin{equation*}
		\begin{split}
			H(w)=&-\int_{\R}\frac{3}{(1+|x|)^4}|w|^2dx-\int_{\R}\left[(\frac{1}{1+|x|}w)_x\right]^2dx+2\int_\R \frac{1}{(1+|x|)^2}|w_x|^2dx,\\
			J(w)=&\int_{\R}\left[(\frac{1}{1+|x|}w)_x\right]^2dx-\mu\int_\R x(1+|x|)V^{\prime}(x)\left|\frac{1}{1+|x|}w\right|^2dx.
		\end{split}
	\end{equation*}
By the similar arguments as in Lemma \ref{lem3.2}, we can confirm that
	$$H(u_1)+H(u_2)\gtrsim\|u\|^2_{H^1_\alpha}.$$
The remainder is to verify $J(w)\geq0$. Set $V_0=-x(1+|x|)V^{\prime}(x)$,  combining Lemma \ref{lemma2.1} and \eqref{eq5},  Condition $(V)$ implies that there exists a $\mu_0>0$ such that $-\frac{d^2}{dx^2}+\mu V_0$ has a unique negative eigenvalue (or no negative eigenvalue)  for all $\mu<\mu_0.$ Furthermore, the corresponding eigenfunction is even, which guarantees $J(w)\geq0$.
	
	Parallel to Sect. \ref{s3.2}, with the previous estimates for all terms in \eqref{eq21.1}, by the same arguments as those for Lemma \ref{lem4.2} and Lemma \ref{lem3.2.2}, we can derive the following conclusion. Indeed, when $K$ satisfies $(K_1)$, the smallness condition on initial data is not needed (it is the same as the proof of Lemma \ref{lem3.2.2}). We have
\begin{lemma}
Under the condition $(2)$ and the assumptions of Theorem \ref{theorem{2}}, if  $u\in C(\R;H^1_{odd}(\R))$ is a global solution of \eqref{eq20}, then for $\alpha(x)=\frac{1}{(1+|x|)^4}$,
\begin{equation*}
			-\frac{d}{dt}I(u(t))\gtrsim\|u(t)\|^2_{H^1_\alpha(\R)}.
\end{equation*}
\end{lemma}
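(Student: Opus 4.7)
The plan is to start from the Virial identity \eqref{eq21.1} and decompose
\begin{equation*}
-\frac{d}{dt}I(u(t)) = \bigl(B_V(u_1)+B_V(u_2)\bigr) + I_2 + I_3,
\end{equation*}
where
\begin{equation*}
I_2 = -\left(\frac{2}{2\sigma+2}-1\right)\int_{\R}\frac{K(x)}{(1+|x|)^2}|u|^{2\sigma+2}dx,\qquad I_3 = -\frac{2}{2\sigma+2}\int_{\R}\frac{xK'(x)}{1+|x|}|u|^{2\sigma+2}dx.
\end{equation*}
The contribution from $B_V$ is already handled by the claim proved just before the statement, namely $B_V(u_1)+B_V(u_2) \gtrsim \|u\|^2_{H^1_\alpha(\R)}$ for odd $u$; here one uses Lemma \ref{lemma2.1} applied to $V_0(x)=-x(1+|x|)V'(x)$ (whose integrability follows from hypothesis $(V)$) together with the fact that the unique negative eigenfunction is even, so it is orthogonal to the odd components $u_1,u_2$. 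Thus it only remains to control $I_2,I_3$ and absorb them appropriately.

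Under hypothesis $(K_1)$, we have $K(x)>0$ and $xK'(x)\le 0$, while $\frac{2}{2\sigma+2}-1<0$, so $I_2\geq 0$ and $I_3\geq 0$ drop without further work, yielding directly $-\frac{d}{dt}I(u(t)) \geq B_V(u_1)+B_V(u_2) \gtrsim \|u\|^2_{H^1_\alpha(\R)}$, without any smallness assumption. This mirrors the $(K_1)$ part of Lemma \ref{lem3.2.2}.

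Under hypothesis $(K_4)$, $K$ may change sign, so we must estimate $|I_2|$ and $|I_3|$. Using $|K(x)|\lesssim |x|^{-b}(1+|x|)^{-3}$ and $|K'(x)|\lesssim |x|^{-b-1}(1+|x|)^{-3}$, a direct bound gives
\begin{equation*}
|I_2|+|I_3| \lesssim \int_{\R}\frac{|x|^{-b}}{(1+|x|)^4}|u|^{2\sigma+2}dx.
\end{equation*}
Now proceed exactly as in Lemma \ref{lem3.2.2}: bound $|u|^{2\sigma}$ pointwise by $\|u\|_{L^\infty}^{2\sigma}$, factor the remaining $|x|^{-b}\bigl|\tfrac{1}{(1+|x|)^2}u\bigr|^2$, and apply the Gagliardo-Nirenberg type inequality \eqref{eq6.1} to $\frac{1}{(1+|x|)^2}u$. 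Controlling $\bigl\|(\frac{1}{(1+|x|)^2}u)_x\bigr\|_{L^2}$ by $\|u\|_{H^1_\alpha}$ via the same integration-by-parts manipulation used in Lemma \ref{lem3.2} leads to
\begin{equation*}
|I_2|+|I_3| \lesssim \|u\|_{H^1(\R)}^{2\sigma}\,\|u\|^2_{H^1_\alpha(\R)}.
\end{equation*}
Finally, invoking Theorem 6.1.4 of \cite{Cz2003}, we choose $\epsilon$ small enough so that $\|u_0\|_{H^1}\leq\epsilon$ forces $\sup_{t\in\R}\|u(t)\|_{H^1}<\delta$ with $\delta^{2\sigma}$ strictly smaller than the implicit constant from $B_V$. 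Then $|I_2|+|I_3|$ is absorbed into $B_V(u_1)+B_V(u_2)$, giving the desired estimate.

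The main technical point is verifying that the odd-symmetry argument for $J(w)$ (already carried out for the claim preceding the lemma) is compatible with the $K$-side estimates. There is no real obstacle beyond bookkeeping: the $(K_1)$ case is sign-structural and needs no smallness, while the $(K_4)$ case reduces to the same $L^\infty$-Gagliardo-Nirenberg absorption argument as in Lemma \ref{lem3.2.2}, now applied on top of the potential-corrected quadratic form $B_V$.
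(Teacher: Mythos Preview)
Your proposal is correct and follows essentially the same route as the paper: the paper states this lemma with only the remark that it follows ``by the same arguments as those for Lemma \ref{lem4.2} and Lemma \ref{lem3.2.2}'', which is precisely the decomposition $-\frac{d}{dt}I(u(t))=B_V(u_1)+B_V(u_2)+I_2+I_3$, the sign argument under $(K_1)$, and the $L^\infty$--Gagliardo--Nirenberg absorption under $(K_4)$ with smallness supplied via Theorem 6.1.4 of \cite{Cz2003}. The only bookkeeping point worth noting is that the applicability of Theorem 6.1.4 in the presence of the potential relies on $V\in L^1+L^\infty$, which was verified in Lemma \ref{lem4.2}; you implicitly use this when invoking the smallness step.
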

	
\begin{proof}[Proof of \eqref{eqv2}]
It follows immediately from Lemma \ref{lem3.5.2} that, there exists a constant $M>0$ such that
		\begin{equation}\label{eq23}
			\int_{0}^{\infty} \|u(t)\|^2_{H^1_\alpha(\R)}dt\leq M.
		\end{equation}
Now, taking $\psi(x)=\frac{1}{(1+|x|)^4}$, with the help of \eqref{eq23}, and through similar arguments as in Sect. \ref{s3.2},  we can arrive at \eqref{eqv2} and thus complete the proof of Theorem \ref{theorem{2}}.
\end{proof}

\end{document}